\documentclass[11pt, a4paper]{amsart}
\usepackage{fullpage}
\usepackage{amssymb}
\usepackage{commath}
\usepackage{empheq}
\usepackage{amsmath, amsthm}
 \usepackage[foot]{amsaddr}
\usepackage{dsfont}
\usepackage{multicol}

\usepackage{etoolbox}
\makeatletter
\let\ams@starttoc\@starttoc
\makeatother
\usepackage[parfill]{parskip}
\makeatletter
\let\@starttoc\ams@starttoc
\patchcmd{\@starttoc}{\makeatletter}{\makeatletter\parskip\z@}{}{}
\makeatother

\usepackage{tikz}
\usetikzlibrary{matrix,arrows,decorations.pathmorphing,decorations.markings}
\usepackage{tikz-cd}

\setlength{\textwidth}{15cm}
\setlength{\textheight}{9.5in}
\setlength{\oddsidemargin}{0.3cm}
\setlength{\evensidemargin}{0.3cm}

\usepackage{capt-of}

\usepackage{enumitem}

\usepackage{mathrsfs}

\usepackage{url}

\newcommand{\ul}{\underline}
\newcommand{\fas}{\mathcal{F}(as)}
\newcommand{\ifas}{\mathcal{IF}(as)}
\newcommand{\llb}{\left\lbrace}
\newcommand{\rrb}{\right\rbrace}

\renewcommand{\phi}{\varphi}

\newcommand{\C}{\mathbf{C}}
\newcommand{\RB}{\mathbb{R}\mathbb{B}}
\newcommand{\RNum}[1]{\MakeUppercase{\romannumeral #1}}

\newtheorem{thm}{Theorem}[section]

\theoremstyle{plain}
\newtheorem{prop}[thm]{Proposition}

\newtheorem{cor}[thm]{Corollary}

\theoremstyle{definition}
\newtheorem{defn}[thm]{Definition}
\theoremstyle{remark}
\newtheorem{rem}[thm]{Remark}

\newtheorem{eg}[thm]{Example}

\title{Categorifying equivariant monoids}
\author{Daniel Graves}
\address{School of Mathematics, University of Leeds, Woodhouse, Leeds, LS2 9JT, UK}
\email{dan.graves92@gmail.com}
\date{}

\begin{document}

\keywords{PROP, PROB, monoid, comonoid, bimonoid, group action, symmetric monoidal category, braided monoidal category, crossed simplicial group}
\subjclass{16T10, 16W22, 18M05, 18M15, 18M85}

\maketitle

\begin{abstract}
Equivariant monoids are very important objects in many branches of mathematics: they combine the notion of multiplication and the concept of a group action. In this paper we will construct categories which encode the structure borne by monoids with a group action by combining the theory of PROPs and PROBs with the theory of crossed simplicial groups. PROPs and PROBs are categories used to encode structures borne by objects in symmetric and braided monoidal categories respectively, whilst crossed simplicial groups are categories which encode a unital, associative multiplication and a compatible group action. We will produce PROPs and PROBs whose categories of algebras are equivalent to the categories of monoids, comonoids and bimonoids with group action using extensions of the symmetric and braid crossed simplicial groups. We will extend this theory to balanced braided monoidal categories using the ribbon braid crossed simplicial group. Finally, we will use the hyperoctahedral crossed simplicial group to encode the structure of an involutive monoid with a compatible group action. 
\end{abstract}

\section*{Introduction}
Equivariant monoids (also known as monoids with an action of a group $G$, or $G$-monoids) are very important objects in many branches of mathematics: they combine the notion of multiplication and the concept of a group action. Important examples in homological algebra and algebraic topology include equivariant algebras over a commutative ring and equivariant orthogonal ring spectra. These examples will be discussed further in Section \ref{alg-sec}.

In this paper we construct categories which encode the structure inherent in a $G$-monoid in any symmetric or braided monoidal category, by combining the theories of PROPs, PROBs and crossed simplicial groups.

PROPs are ``product and permutation" categories. They were originally introduced by Mac Lane in collaboration with Adams \cite[24]{MLCA}, to encode operations with multiple inputs and multiple outputs. PROPs come in a variety of flavours: the topological version was defined by Boardman and Vogt \cite{BV}; coloured PROPs have been studied by Johnson and Yau \cite{JY}; simplicial PROPs have been studied by Hackney and Robertson \cite{HR}; wheeled PROPs have studied by Markl, Merkulov and Shadrin \cite{MMS}. The notion of a PROP is more general than the notion of an operad, as we shall soon discuss. There is a rich theory of using PROPs to encode and link structures that are of interest in topology and algebra. For example, the papers \cite{Abrams} and \cite{RSW} provide PROPs for categories of Frobenius algebras (with extra structure) in terms of two-dimensional cobordisms and cospans of finite sets. These have proved to have applications further afield, such as in network theory \cite{BCR}. Another example comes from the papers \cite{MM1} and \cite{MM2} where PROPs are used to study $E_{\infty}$-structures. 

Crossed simplicial groups were introduced independently by Fiedorowicz and Loday \cite{FL} and Krasauskas \cite{Kras-skew}. These papers explore the relationship between crossed simplicial groups and equivariant homology. Crossed simplicial groups have also been used to study marked surfaces equipped with group actions \cite{DK}. A crossed simplicial group is a family of groups together with face and degeneracy maps that satisfy ``crossed" simplicial identities. Any crossed simplical group has an associated category that encodes a unital, associative multiplication compatible with group actions given by the family of groups. The foundational example is Connes' cyclic category \cite{Con}. In this paper, we are going to consider the families of symmetric groups, hyperoctahedral groups, braid groups and ribbon braid groups, all of which have interesting applications in algebraic topology. The symmetric and hyperoctahedral crossed simplicial groups are used in stable homotopy theory and equivariant stable homotopy theory; see \cite{Ault}, \cite{Ault-HO}, \cite{DMG-hyp} and \cite{DMG-e-inf}. The homology theory associated to the braid crossed simplicial group was originally used by Fiedorowicz to study two-fold loop-suspension spaces \cite{Fie}. The braid crossed simplicial group also arises in the study of configuration spaces \cite{BCWW}. The ribbon braid crossed simplicial group has been considered by Krasauskas in connection with framed braids and framed mapping class groups of smooth manifolds \cite{Krasauskas}.

The idea of combining the theory of PROPs and the theory of crossed simplicial groups to categorify categories of monoids goes back to Pirashvili \cite{Pir-PROP}. Pirashvili uses the structure of the symmetric crossed simplicial groups to construct PROPs whose categories of algebras are monoids, comonoids and bimonoids in a symmetric monoidal category. These results provided a generalization of previous results of Markl \cite{markl}, who constructed such PROPs in terms of generators and relations in order to study the deformation theory of algebras. Pirashvili's result shows that PROPs offer a strict generalization of operads. There is no operad whose algebras are bialgebras, but Pirashvili shows that there is such a PROP. Lack \cite{Lack} showed that Pirashvili's result fits into a broader categorical framework; that of composing PROPs via a distributive law. Using the techniques of Pirashvili and Lack, the author extended the existing theory to construct PROPs for monoids, comonoids and bimonoids equipped with an order-reversing involution, using the hyperoctahedral crossed simplicial group \cite{DMG-IFAS}. The author has also shown that one can extend the results of Lack's paper to the setting of braided monoidal categories and prove analogues of Pirashvili's result by using the braid crossed simplicial group \cite{DMG-PROB}. In this setting the categories we consider are called PROBs; ``product and braid" categories.

In this paper we will study equivariant extensions of the symmetric, braid and hyperoctahedral crossed simplicial groups and, in doing so, we completely generalize the existing results. In particular, our Theorems \ref{bimon-thm}, \ref{symm-bimon-thm} and \ref{hyp-bimon-thm} specialize to the theorems \cite[7.2]{DMG-PROB}, \cite[5.2]{Pir-PROP} and \cite[5.3]{DMG-IFAS} respectively, by taking the specific case of the trivial group. We also recall what it means for a braided monoidal category to be balanced and extend the results to this setting by considering the ribbon braid crossed simplicial group and its equivariant extensions.

The paper is organized as follows. In Section \ref{alg-sec} we recall the definitions of $G$-monoids, $G$-comonoids and $G$-bimonoids in a symmetric or braided monoidal category, together with examples of interest from topology and homological algebra. In Section \ref{PROP-sec} we recall the definitions of PROPs and PROBs. We recall the definition of an algebra for these categories in a symmetric or braided monoidal category. We provide a list of examples that will be used throughout the paper. In Section \ref{dist-law-sec} we describe the procedure for forming new PROPs and PROBs via a distributive law in two special cases. We show that the group composition in certain families of semi-direct products induces a distributive law between the associated groupoids. We also recall how the composition law in a crossed simplicial group determines a distributive law. We use these distributive laws to form new PROPs and PROBs from the examples in Section \ref{PROP-sec}. In Section \ref{mon-comon-sec} we prove that we have constructed PROPs and PROBs whose categories of algebras are equivalent to the categories of $G$-monoids and $G$-comonoids. We prove this for symmetric monoidal categories, braided monoidal categories and balanced braided monoidal categories. We also construct a PROP for  $G$-commutative monoids and $G$-cocommutative comonoids in a symmetric monoidal category. In Section \ref{braid-bimon-sec} we construct the PROBs for $G$-bimonoids in braided and balanced braided monoidal categories by composing the PROBs for $G$-monoids and  $G$-comonoids. In Section \ref{symm-bimon-sec} we prove similar results for $G$bimonoids in a symmetric monoidal category. We construct the necessary PROPs in two different ways. In the first instance we provide a proof that is similar to the one given for braided monoidal categories; by composing the PROPs for $G$-monoids and $G$-comonoids. However, in the symmetric monoidal case we have an interesting alternative description of these PROPs in terms of categories of non-commutative sets and double categories. Using this construction also allows us to give a nice description of the PROPs for $G$-bimonoids that are commutative, cocommutative or both. In Section \ref{hyp-bimon-sec} we note that in a symmetric monoidal category we have monoids that possess both an involution and a group action in a compatible way. Group algebras provide one important example. We use the equivariant extensions of the hyperoctahedral crossed simplicial group to construct a PROP which encodes this data.

\subsection*{Acknowledgements}
I would like to thank James Cranch and Sarah Whitehouse for interesting and helpful conversations whilst writing this paper.

\section{Equivariant monoids}
\label{alg-sec}
In this section we introduce some notation for categories of $G$-monoids, $G$-comonoids and $G$-bimonoids. We also give some examples from homological algebra and algebraic topology.

\begin{defn}
Let $\mathbf{C}$ be a braided or symmetric monoidal category. We denote by $\mathbf{Mon}\left(\mathbf{C}\right)$, $\mathbf{Comon}\left(\mathbf{C}\right)$ and $\mathbf{Bimon}\left(\mathbf{C}\right)$ the categories of monoids, comonoids and bimonoids in $\mathbf{C}$ respectively.
\end{defn}

\begin{defn}
Let $\mathbf{C}$ be a braided or symmetric monoidal category and let $G$ be a group. We denote the functor category $\mathrm{Fun}\left(G, \mathbf{Mon}(\mathbf{C})\right)$ by $\mathbf{GMon}$ and call it the \emph{category of $G$-monoids in $\mathbf{C}$}.
Similarly we denote the functor categories $\mathrm{Fun}\left(G, \mathbf{Comon}(\mathbf{C})\right)$ and $\mathrm{Fun}\left(G, \mathbf{Bimon}(\mathbf{C})\right)$ by $\mathbf{GComon}\left(\mathbf{C}\right)$ and $\mathbf{GBimon}\left(\mathbf{C}\right)$ respectively and call them the \emph{categories of $G$-comonoids and $G$-bimonoids in $\C$}.
\end{defn}

\begin{eg}
We note that we can think of monoids in a symmetric or braided monoidal category as $G$-monoids, with $G$ being the trivial group.
\end{eg}

\begin{eg}
The following three examples demonstrate the importance of $G$-monoids in algebraic topology and homological algebra.
\begin{enumerate}
\item Let $k$ be a commutative ring and let $\C$ be the symmetric monoidal category of $k$-modules, $\mathbf{Mod}_k$. In this case, a $G$-monoid is a $G$-algebra. Explicitly, a $G$-algebra is an associative $k$-algebra $A$ equipped with a group homomorphism $G\rightarrow \mathrm{Aut}(A)$. These have been studied by Inassaridze in connection with equivariant homological algebra \cite{Inassaridze}.
\item Let $G$ be a group. Consider the braided monoidal category of crossed $G$-sets. This is described in \cite[4.2]{FY} and used to define invariants in low dimensional topology. A monoid in this category is a crossed $G$-monoid. These are described in \cite[Definition 4.1]{Bouc}. One can consider these as $G$-monoids in the category of crossed $G$-sets via the multiplication in $G$. Bouc uses crossed $G$-monoids to construct certain Green functors and Mackey functors and uses these to study the Hochschild cohomology of $G$ over a ring.
\item Consider the symmetric monoidal category of orthogonal spectra, $\mathbf{Sp}^{O}$. This category first appeared in \cite{MMSS}. A monoid in $\mathbf{Sp}^{O}$ is an orthogonal ring spectrum. A $G$-monoid in $\mathbf{Sp}^{O}$ is a $G$-equivariant orthogonal ring spectrum. Equivariant orthogonal spectra were first studied in \cite{MM-equiv}. These are important objects in equivariant stable homotopy theory. For example, they are used in the construction of the $\infty$-category of genuine $G$-equivariant spectra; see \cite[Section \RNum{2}.2]{NS} or \cite[Part 2]{MNN} for example.
\end{enumerate}
\end{eg}

\section{PROPs and PROBs}
\label{PROP-sec}
In this section we recall the definitions of PROPs and PROBs. We recall the definition of an algebra for such a category in a symmetric or braided monoidal category. We give a list of the examples that will be used throughout the paper. 

\begin{defn}
\label{sets-defn}
For $n\geqslant 1$ we define $\ul{n}$ to be the set $\llb 1,\dotsc ,n\rrb$. We define $\ul{0}=\emptyset$.
\end{defn}

\begin{defn}
Let $\mathbf{M}$ be a monoidal category, let $\mathbf{S}$ be a symmetric monoidal category and let $\mathbf{B}$ be a braided monoidal category.
\begin{enumerate}
\item A PRO $\mathbf{T}$ is a strict monoidal category whose objects are the sets $\ul{n}$ for $n\geqslant 0$ and whose tensor product is given by addition. An \emph{algebra of $\mathbf{T}$ in $\mathbf{M}$} is a strict monoidal functor $\mathbf{T}\rightarrow \mathbf{M}$.
\item A \emph{PROP} $\mathbf{P}$ is a symmetric strict monoidal category whose objects are the sets $\ul{n}$ for $n\geqslant 0$ with tensor product given by addition. A \emph{$\mathbf{P}$-algebra in $\mathbf{S}$} is a symmetric strict monoidal functor $\mathbf{P}\rightarrow\mathbf{S}$. We denote the category of $\mathbf{P}$-algebras in $\mathbf{S}$ and natural transformations by $\mathbf{Alg}\left(\mathbf{P},\mathbf{S}\right)$.
\item A \emph{PROB} $\mathbf{Q}$ is a braided strict monoidal category whose objects are the sets $\ul{n}$ for $n\geqslant 0$ with tensor product given by addition. A \emph{$\mathbf{Q}$-algebra in $\mathbf{B}$} is a braided strict monoidal functor $\mathbf{Q}\rightarrow\mathbf{B}$. We denote the category of $\mathbf{Q}$-algebras in $\mathbf{B}$ and natural transformations by $\mathbf{Alg}\left(\mathbf{Q},\mathbf{B}\right)$.
\end{enumerate}
\end{defn}

\begin{eg}
\label{PROs-eg}
The following examples are the building blocks for the results of this paper.
\begin{enumerate}
\item Following \cite[2.2]{Lack} we denote by $\mathbb{D}$ the PRO of finite ordinals and order-preserving maps. For a strict monoidal category $\mathbf{M}$, an algebra of $\mathbb{D}$ in $\mathbf{M}$ is a monoid in $\mathbf{M}$, see \cite[VII 5]{ML}.
\item Following \cite[5.1]{Lack}, let $\mathbb{F}$ denote the skeletal category of finite sets. For a symmetric monoidal category $\mathbf{S}$, an algebra of $\mathbb{F}$ in $\mathbf{S}$ is a commutative monoid.
\item Let $G$ be a group. Let $\mathbb{G}$ be the PRO such that $\mathrm{Hom}_{\mathbb{G}}\left(\ul{n} , \ul{m}\right)$ is empty if $n\neq m$ and $\mathrm{Hom}_{\mathbb{G}}\left(\ul{n} , \ul{n}\right) = G^n$. The disjoint union of morphisms corresponds to the product of group elements.
\item Following \cite[2.4]{Lack}, let $\mathbb{P}$ denote the PROP of finite sets and bijections. Observe that this is the subobject of $\mathbb{F}$ consisting of the isomorphisms in the skeletal category of finite sets.
\item Following \cite[2.11]{DMG-IFAS}, let $\mathbb{H}$ denote the PROP of hyperoctahedral groups. Recall that the hyperoctahedral group $H_n$ is the semi-direct product $C_2^n\rtimes \Sigma_n$.
\item Following \cite[Example 2.1]{JS}, let $\mathbb{B}$ denote the PROB of braid groups. The set $\mathrm{Hom}_{\mathbb{B}}\left(\ul{n},\ul{m}\right)$ is empty for $n\neq m$ and $\mathrm{Hom}_{\mathbb{B}}\left(\ul{n} , \ul{n}\right)=B_n$, the braid group on $n$ strings. The strict monoidal structure is given by the \emph{addition of braids}. 
\item Following \cite[Example 6.4]{JS}, let $\RB$ denote the PROB of ribbon braid groups. The set $\mathrm{Hom}_{\RB}\left(\ul{n},\ul{m}\right)$ is empty for $n\neq m$ and $\mathrm{Hom}_{\mathbb{\RB}}\left(\ul{n} , \ul{n}\right)=RB_n$, the ribbon braid group on $n$ ribbons. The strict monoidal structure is given by addition of ribbons. The ribbon braid group $RB_n$ can be identified with the semi-direct product $\mathbb{Z}^n\rtimes B_n$ where the integer labels indicate the number of full twists to perform on each ribbon.
\end{enumerate}
\end{eg}

\section{Distributive laws and crossed simplicial groups}
\label{dist-law-sec}
We can form new PROs, PROPs and PROBs via the notion of a \emph{distributive law} as defined in \cite[Section 3]{Lack} and \cite[Section 2]{RW}. In particular we will form composites from the the categories defined in Example \ref{PROs-eg}. 

\subsection{Semi-direct products}
Let $G$ be a group and consider the semi-direct products $G^n\rtimes \Sigma_n$, $G^n\rtimes B_n$, $G^n\rtimes RB_n$ and $G^n\rtimes H_n$ where in each case the tuple $G^n$ is acted on by the underlying permutation. These groups are sometimes also called wreath products. We can think of these as labelled permutations (\cite[Theorem 3.10]{FL}), labelled braids (\cite[Example 2.1]{JS}), labelled ribbons (\cite[Example 6.4]{JS}) and labelled hyperoctahedral group elements. 

The definition of composition in these semi-direct products satisfies the data of a distributive law. For example, if we consider the symmetric case, a pair $\left(\mathbf{x} , \sigma\right)$ where $\sigma\in \mathrm{Hom}_{\mathbb{P}}\left(\ul{n} ,\ul{n}\right)$ and $\mathbf{x}\in \mathrm{Hom}_{\mathbb{G}}\left(\ul{n} , \ul{n}\right)$ determines a unique pair $\left(\sigma , \sigma\left(\mathbf{x}\right)\right)$ where $\sigma$ remains unchanged and $\sigma$ acts on the element $\mathbf{x}\in G^n$ by permutation. One can readily check that this assignment is compatible with the monoidal structure of both categories. Following \cite[3.8]{Lack}, we have a well-defined PRO $\mathbb{P}\otimes \mathbb{G}$, with $\mathrm{Hom}_{\mathbb{P}\otimes \mathbb{G}}\left(\ul{n} , \ul{m}\right) = \emptyset$ if $n\neq m$ and $\mathrm{Hom}_{\mathbb{P}\otimes \mathbb{G}}\left(\ul{n} , \ul{n}\right) = G^n\rtimes \Sigma_n$. In fact, this category has a canonical PROP structure induced from $\mathbb{P}$

Mimicking this procedure we form a PROP $\mathbb{H}\otimes \mathbb{G}$ and PROBs $\mathbb{B}\otimes \mathbb{G}$ and $\RB\otimes \mathbb{G}$.

\subsection{Crossed simplicial groups}
Consider a family of groups $\llb J_n\rrb$ indexed over the natural numbers. Let $\mathbb{J}$ be the groupoid whose objects are the sets $\ul{n}$ with $\mathrm{Hom}_{\mathbb{J}}\left(\ul{n} , \ul{n}\right) = J_n$. We use $J$ for a group here because we are already using $G$ for a group, and $H$ for hyperoctahedral groups. One of the equivalent definitions for $\llb J_n \rrb$ to be a crossed simplicial group is the existence of a distributive law $\mathbb{J}\otimes \mathbb{D}\rightarrow \mathbb{D}\otimes \mathbb{J}$. In other words, given a pair of morphisms $\left(j, \phi\right)$ with $\phi \in \mathrm{Hom}_{\mathbb{D}}\left(\ul{n} , \ul{m}\right)$ and $j\in \mathrm{Hom}_{\mathbb{J}}\left(\ul{m} , \ul{m}\right)$ we have a unique pair $\left(j_{\star}(\phi),\phi^{\star}(j)\right)$ with $\phi^{\star}(j) \in \mathrm{Hom}_{\mathbb{J}}\left(\ul{n}, \ul{n}\right)$ and $j_{\star}(\phi) \in \mathrm{Hom}_{\mathbb{D}}\left(\ul{n}, \ul{m}\right)$, satisfying the relations of \cite[2.4]{RW} and compatible with the monoidal structure of both categories. These assignments turn $\mathbb{D}\otimes \mathbb{J}$ into a well-defined category. The objects are the sets $\ul{n}$. An element of $\mathrm{Hom}_{\mathbb{D}\otimes \mathbb{J}}\left(\ul{n} , \ul{m}\right)$ is a pair $\left(\phi , j\right)$ with $j\in \mathrm{Hom}_{\mathbb{J}}\left(\ul{n} , \ul{n}\right)$ and $\phi\in \mathrm{Hom}_{\mathbb{D}}\left(\ul{n} , \ul{m}\right)$, with composition determined by the distributive law. 

The descriptions of these maps for the families of symmetric groups, hyperoctahedral groups and braid groups were given in \cite[3.1,\,3.3,\,3.7]{FL} respectively. For the case of ribbon braids one follows \cite[3.7]{FL}, replacing the identity braid with the identity ribbon.

By a theorem of Fiedorowicz and Loday \cite[Theorem 3.10]{FL}, we know that the family of groups $\llb G^n\rtimes \Sigma_n\rrb$ form a crossed simplicial group. In particular, the proof describes assignments $\phi^{\star}$ and $g_{\star}$ which describe a distributive law
\[\left(\mathbb{P}\otimes \mathbb{G}\right) \otimes \mathbb{D} \rightarrow \mathbb{D}\otimes \left(\mathbb{P}\otimes \mathbb{G}\right),\]
compatible with the monoidal structure. These assignments are constructed from those in the symmetric crossed simplicial group by extending them to also take into account the $G$-labels. We  therefore have a well-defined PROP $\mathbb{D}\otimes \mathbb{P}\otimes \mathbb{G}$, whose set of morphisms $\mathrm{Hom}_{\mathbb{D}\otimes \mathbb{P}\otimes \mathbb{G}}\left(\ul{n} , \ul{m}\right)$ consists of pairs $(\phi, g)$ with $g\in \mathrm{Hom}_{\mathbb{P}\otimes \mathbb{G}}\left(\ul{n} , \ul{n}\right)= G^n\rtimes \Sigma_n$ and $\phi \in \mathrm{Hom}_{\mathbb{D}}\left(\ul{n}, \ul{m}\right)$.

We observe that the proof \cite[Theorem 3.10]{FL} works equally well when we replace $\mathbb{P}\otimes \mathbb{G}$ with the categories $\mathbb{H}\otimes \mathbb{G}$, $\mathbb{B}\otimes \mathbb{G}$ and $\RB\otimes \mathbb{G}$. In this way we form a PROP $\mathbb{D}\otimes \mathbb{H} \otimes \mathbb{G}$ and PROBs $\mathbb{D}\otimes  \mathbb{B}\otimes \mathbb{G}$ and $\mathbb{D}\otimes \RB \otimes \mathbb{G}$. 

\begin{rem}
\label{decomp-rem}
Note that any morphism $\left(\phi , j\right)$ can be decomposed in terms of precisely three morphisms. We have the group element $j$ as an automorphism. An order-preserving map $\phi$ has a unique decomposition in terms of face and degeneracy maps \cite[B.2]{Lod}. However, using the monoidal structure in the category $\mathbb{D}$, any face map can be written as a tensor product of the unique morphism $m \in \mathrm{Hom}_{\mathbb{D}}\left(\ul{2} , \ul{1}\right)$ with identity morphisms and any  degeneracy maps can be expressed as a tensor product of the unique map $u\in \mathrm{Hom}_{\mathbb{D}}\left(\ul{0}, \ul{1}\right)$ with identity morphisms. Therefore, using the unique decomposition of order-preserving maps and the monoidal structure we can write any morphism in a crossed simplicial group as a finite expression in terms of $m$, $u$ and the group elements.
\end{rem}

\subsection{A distributive law for skeletal finite sets}
We define a distributive law between the PROP given by the skeletal category of finite sets, $\mathbb{F}$, and the groupoid $\mathbb{G}$. Given a pair $\left(\mathbf{x}, f\right)$ with $f\in \mathrm{Hom}_{\mathbb{F}}\left(\ul{n} , \ul{m}\right)$ and 
\[\mathbf{x} = \left(g_1,\dotsc, g_m\right) \in G^m,\]
we have a unique pair $\left(f, \mathbf{x}^{\prime}\right)$ where $f$ has remained unchanged and 
\[\mathbf{x}^{\prime} = \left(g_{f(1)}, \dotsc , g_{f(n)}\right)\in G^n.\]
One can readily check that this satisfies the relations of \cite[2.4]{RW} and is compatible with the monoidal structure of both categories. We therefore have a well-defined PROP $\mathbb{F}\otimes \mathbb{G}$.

\section{Categorifying equivariant monoids and comonoids}
\label{mon-comon-sec}

We show that the PROP $\mathbb{D}\otimes  \mathbb{P}\otimes \mathbb{G}$ and the PROB $\mathbb{D}\otimes  \mathbb{B}\otimes \mathbb{G}$ encode the structures of $G$-monoids and $G$-comonoids in symmetric and braided monoidal categories. We show that if we take the PROP $\mathbb{F}\otimes \mathbb{G}$ we encode the structures of  $G$-commutative monoids and $G$-cocommutative comonoids. We also recall the notion of a balanced braided monoidal category and show that the PROBs $\mathbb{D}\otimes \RB$ and $\mathbb{D}\otimes \RB \otimes \mathbb{G}$ encode the structures of monoids and $G$-monoids in a balanced braided monoidal category.

\begin{prop}
\label{comp-prop}
Let $\mathbf{S}$ be a symmetric monoidal category, let $\mathbf{B}$ be a braided monoidal category and let $G$ be a group. There are equivalences of categories 
\begin{enumerate}
\item $\mathbf{Alg}\left( \mathbb{D}\otimes \mathbb{P}\otimes \mathbb{G} , \mathbf{S}\right) \simeq \mathbf{GMon}\left(\mathbf{S}\right)$;
\item $\mathbf{Alg}\left( \mathbb{D}\otimes \mathbb{B}\otimes \mathbb{G} , \mathbf{B}\right) \simeq \mathbf{GMon}\left(\mathbf{B}\right)$;
\item $\mathbf{Alg}\left( \left(\mathbb{D}\otimes \mathbb{P}\otimes \mathbb{G}\right)^{op} , \mathbf{S}\right) \simeq \mathbf{GComon}\left(\mathbf{S}\right)$ and
\item $\mathbf{Alg}\left( \left(\mathbb{D}\otimes \mathbb{B}\otimes \mathbb{G}\right)^{op} , \mathbf{B}\right) \simeq \mathbf{GComon}\left(\mathbf{B}\right)$.
\end{enumerate}
\end{prop}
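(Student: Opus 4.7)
The plan is to establish the four equivalences by constructing mutually inverse functors and verifying that data and relations correspond. I will carry out the argument for (1); part (2) is essentially identical with braid generators replacing symmetric transpositions (and using that a braided strict monoidal functor preserves braids, not merely their underlying permutations), while (3) and (4) follow by passing to opposite categories, whereby $m\in\mathrm{Hom}_{\mathbb{D}}(\ul{2},\ul{1})$ becomes a comultiplication $\ul{1}\to\ul{2}$ and $u$ becomes a counit.

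First I would define $\Phi:\mathbf{Alg}(\mathbb{D}\otimes \mathbb{P}\otimes \mathbb{G},\mathbf{S})\to \mathbf{GMon}(\mathbf{S})$ by sending an algebra $F$ to the quadruple $(X,\mu,\eta,\rho)$ with $X = F(\ul{1})$, $\mu = F(m)$, $\eta = F(u)$, and $\rho_g = F(g)$ for each $g\in G$. Since $F$ is symmetric strict monoidal, its image on permutations is forced to be the symmetry of $\mathbf{S}$. The monoid axioms for $(X,\mu,\eta)$ then follow from the simplicial identities in $\mathbb{D}$ in the familiar way of \cite[VII 5]{ML}; the assignment $g\mapsto \rho_g$ is a group homomorphism because composition in the groupoid $\mathbb{G}$ is multiplication in $G$; and the crucial compatibility --- that each $\rho_g$ is a morphism of monoids --- is the image under $F$ of the distributive-law relation identifying $(m,\mathrm{id})\circ(\mathrm{id},(g,g))$ with $(\mathrm{id},g)\circ(m,\mathrm{id})$ in $\mathbb{D}\otimes \mathbb{P}\otimes \mathbb{G}$, as encoded by the Fiedorowicz--Loday crossed structure on $\{G^n\rtimes \Sigma_n\}$.

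The inverse functor $\Psi$ is built by hand using Remark \ref{decomp-rem}: given a $G$-monoid $(X,\mu,\eta,\rho)$ I send $\ul{n}$ to $X^{\otimes n}$, the generator $m$ to $\mu$, $u$ to $\eta$, each permutation to the induced symmetry of $\mathbf{S}$, and a tuple $(g_1,\dots,g_n)\in G^n$ to $\rho_{g_1}\otimes\cdots\otimes \rho_{g_n}$, and then extend by tensor and composition. The main obstacle is well-definedness: since the decomposition in Remark \ref{decomp-rem} is non-unique, I must check that every relation holding in the composite PROP is satisfied on the target side. These relations split into three families: relations internal to $\mathbb{D}$ (handled by the monoid axioms), relations internal to $\mathbb{P}\otimes \mathbb{G}$ (handled by the symmetry of $\mathbf{S}$ together with the group-homomorphism property of $\rho$), and the mixing relations arising from the two distributive laws $\mathbb{P}\otimes\mathbb{G}\to\mathbb{G}\otimes\mathbb{P}$ and $(\mathbb{P}\otimes\mathbb{G})\otimes \mathbb{D}\to \mathbb{D}\otimes(\mathbb{P}\otimes\mathbb{G})$; the latter translate precisely into the requirements that the $G$-action pass through monoid morphisms and commute with the symmetry.

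Finally, $\Phi\Psi = \mathrm{id}$ holds on the nose by construction, and $\Psi\Phi$ is naturally isomorphic to the identity via the coherence isomorphism $F(\ul{n})\cong F(\ul{1})^{\otimes n}$ afforded by strict monoidality. The same template produces (2) by replacing $\mathbb{P}$ with $\mathbb{B}$ and the symmetry of $\mathbf{S}$ with the braiding of $\mathbf{B}$, so that on the monoid-only part the argument reduces to the verification in \cite[7.2]{DMG-PROB}; and it produces (3)--(4) by dualising the generators $m$ and $u$, the group-action compatibilities dualising symmetrically so that no additional obstruction arises.
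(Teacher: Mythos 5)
Your proposal is correct in substance but takes a genuinely different route from the paper. The paper does not build explicit mutually inverse functors; instead it invokes Lack's general machinery for composite PROs and PROPs: by \cite[3.10]{Lack} an algebra over the underlying PRO of $\mathbb{D}\otimes\mathbb{P}\otimes\mathbb{G}$ is exactly a $\mathbb{D}$-algebra together with a $(\mathbb{P}\otimes\mathbb{G})$-algebra subject to the compatibility imposed by the distributive law, \cite[5.5]{Lack} upgrades this to PROP-algebras by forcing the permutations to act as the symmetries of $\mathbf{S}$, and \cite[3.12]{Lack} (resp.\ \cite[5.10]{DMG-PROB} in the braided case) identifies the morphisms; the comonoid statements are then deduced formally from the monoid ones via $\mathrm{Fun}(\mathbf{C}^{op},\mathbf{D}^{op})\cong\mathrm{Fun}(\mathbf{C},\mathbf{D})^{op}$ and the isomorphism $G\cong G^{op}$. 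Your generators-and-relations construction buys explicitness (and in fact yields $\Psi\Phi=\mathrm{id}$ on the nose, since strict monoidality gives $F(\ul{n})=F(\ul{1})^{\otimes n}$ exactly), but the entire weight of the argument then rests on the well-definedness of $\Psi$, i.e.\ on knowing that the three families of relations you list constitute a \emph{complete} presentation of $\mathbb{D}\otimes\mathbb{P}\otimes\mathbb{G}$; that completeness is precisely the unique-normal-form property of the distributive law, which is the content of the cited results of Lack, so you should either invoke them at that point or prove the normal-form statement directly. Two smaller points: your ``mixing relations'' must also include the rewriting of permutations past $m$ and $u$ coming from the symmetric crossed simplicial group (discharged by naturality of the symmetry in $\mathbf{S}$, not only by the $G$-equivariance conditions); and in dualising for (3) and (4) the induced action is a priori one of $G^{op}$, so the identification $G\cong G^{op}$ (or precomposition with inversion) is needed, as the paper makes explicit.
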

\begin{proof}
The proof for monoids in both cases is similar to \cite[5.5]{Lack}, \cite[2.13]{DMG-IFAS} and \cite[6.2]{DMG-PROB}. We will give the details of the symmetric monoidal case and indicate the necessary changes for the braided monoidal case. Let $U\left(\mathbb{D}\otimes \mathbb{P}\otimes \mathbb{G}\right)$ denote the underlying PRO of the PROP $\mathbb{D}\otimes \mathbb{P}\otimes \mathbb{G}$. By \cite[3.10]{Lack}, a $U\left(\mathbb{D}\otimes \mathbb{P}\otimes \mathbb{G}\right)$-algebra structure on an object $M$ of $\mathbf{S}$ consists of a $\mathbb{D}$-algebra structure and a $\left(\mathbb{P}\otimes \mathbb{G}\right)$-algebra structure subject to a compatibility condition. A $\mathbb{D}$-algebra structure is a monoid structure. A $\left(\mathbb{P}\otimes \mathbb{G}\right)$-algebra is an object $M$ together with a morphism $g\colon M\rightarrow M$ for each element $g\in G$ and, for each element $\left(\sigma, \left(g_1,\dotsc , g_n\right)\right)$, a morphism $M^{\otimes n}\rightarrow M^{\otimes n}$ given by applying $(g_1,\dotsc ,g_n)$ component-wise followed by an isomorphism determined by $\sigma \in\Sigma_n$. By \cite[5.5]{Lack} a $U\left(\mathbb{D}\otimes \mathbb{P}\otimes \mathbb{G}\right)$-algebra structure is a $\left(\mathbb{D}\otimes \mathbb{P}\otimes \mathbb{G}\right)$-algebra structure if and only if the isomorphisms induced from the elements $\sigma \in \Sigma_n$ are the symmetry isomorphisms. The compatibility condition ensures that the morphisms $g\colon M\rightarrow M$ form a group action compatible with the symmetric monoidal structure. Finally, a morphism in $\mathbf{S}$ is a map of $G$-monoids if and only if it respects the $\mathbb{D}$-algebra structure and the $\left(\mathbb{P}\otimes \mathbb{G}\right)$-algebra structure. By \cite[3.12]{Lack}, this is true if and only if it respects the $\left(\mathbb{D}\otimes \mathbb{P}\otimes \mathbb{G}\right)$-algebra structure. The braided monoidal case is similar, using \cite[5.10]{DMG-PROB} in place of \cite[3.12]{Lack}.

For comonoids the proof is also similar for both the symmetric and braided monoidal cases. We give the proof for the symmetric monoidal case. For categories $\mathbf{C}$ and $\mathbf{D}$ there is an isomorphism of functor categories $\mathrm{Fun}\left(\C^{op}, \mathbf{D}^{op}\right) \cong \mathrm{Fun}\left(\C , \mathbf{D}\right)^{op}$ and so
\[\mathrm{Fun}\left(G , \mathbf{Comon}(\mathbf{S})\right) =\mathrm{Fun}\left(G , \mathbf{Mon}(\mathbf{S}^{op})^{op}\right)\cong\mathrm{Fun}\left(G^{op} , \mathbf{Mon}(\mathbf{S}^{op})\right)^{op}.\]
Since a group is isomorphic to its opposite we see that 
\[\mathrm{Fun}\left(G^{op} , \mathbf{Mon}(\mathbf{S}^{op})\right)^{op} \cong\mathrm{Fun}\left(G , \mathbf{Mon}(\mathbf{S}^{op})\right)^{op}=\mathbf{GMon}\left(\mathbf{S}^{op}\right)^{op}.\]
Furthermore, by Proposition \ref{comp-prop},
\[\mathbf{GMon}\left(\mathbf{S}^{op}\right)^{op} \simeq \mathbf{Alg}\left(\mathbb{D}\otimes \mathbb{P}\otimes \mathbb{G} , \mathbf{S}^{op}\right)^{op} = \mathbf{Alg}\left(\left(\mathbb{D}\otimes \mathbb{P}\otimes \mathbb{G}\right)^{op} , \mathbf{S}\right)\]
as required.
\end{proof}

\begin{prop}
\label{commutative-prop}
Let $\mathbf{S}$ be a symmetric monoidal category and let $G$ be a group.
\begin{enumerate}
\item The category $\mathbf{Alg}\left(\mathbb{F}\otimes \mathbb{G}, \mathbf{S}\right)$ is equivalent to the category of $G$-commutative monoids in $\mathbf{S}$;
\item the category $\mathbf{Alg}\left(\left(\mathbb{F}\otimes \mathbb{G}\right)^{op}, \mathbf{S}\right)$ is equivalent to the category of $G$-cocommutative comonoids in $\mathbf{S}$.
\end{enumerate}
\end{prop}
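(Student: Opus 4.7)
The plan is to follow the template of Proposition \ref{comp-prop}, with $\mathbb{F}$ replacing $\mathbb{D}\otimes \mathbb{P}$, and then deduce the comonoid statement from the monoid statement by the same opposite-category manoeuvre used in the proof of Proposition \ref{comp-prop}. First I would reduce the question to the underlying PRO. By the characterization of algebras for a PROP formed via a distributive law (\cite[3.10]{Lack}), an algebra of the underlying PRO $U\left(\mathbb{F}\otimes \mathbb{G}\right)$ on an object $M$ of $\mathbf{S}$ is the data of an $\mathbb{F}$-algebra structure on $M$ together with a $\mathbb{G}$-algebra structure on $M$, subject to a compatibility condition dictated by the distributive law described at the end of Section \ref{dist-law-sec}.

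Next I would unpack each piece. An $\mathbb{F}$-algebra in $\mathbf{S}$ is a commutative monoid in $\mathbf{S}$ by Example \ref{PROs-eg}(2). A $\mathbb{G}$-algebra in $\mathbf{S}$ consists of an object $M$ together with a morphism $g\colon M\to M$ for every $g\in G$, with $M^{\otimes n}\to M^{\otimes n}$ obtained by applying these morphisms component-wise; since $\mathbb{G}$ has only identity tensor products of group elements and composition in $\mathbb{G}$ agrees with multiplication in $G$, this is precisely a functor $G\to \mathbf{S}$, i.e.\ a group action on $M$ by morphisms of $\mathbf{S}$. The compatibility condition coming from the distributive law $\left(\mathbf{x}, f\right)\mapsto \left(f, \mathbf{x}^{\prime}\right)$ described at the end of Section \ref{dist-law-sec} says that for every $f\in \mathrm{Hom}_{\mathbb{F}}(\ul{n},\ul{m})$ and every $\left(g_1,\dotsc ,g_m\right)\in G^m$ the two composites $M^{\otimes n}\to M^{\otimes m}\to M^{\otimes m}$ and $M^{\otimes n}\to M^{\otimes n}\to M^{\otimes m}$ agree; a moment's inspection in the cases $f=m$ (multiplication) and $f=u$ (unit) shows that this is exactly the statement that each $g\in G$ acts on $M$ by monoid automorphisms, so the data of a $U\left(\mathbb{F}\otimes \mathbb{G}\right)$-algebra amounts to a commutative monoid in $\mathbf{S}$ equipped with a compatible $G$-action.

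To promote this to an equivalence of PROP algebras I would then apply the analogue of \cite[5.5]{Lack}: an algebra for the underlying PRO extends (uniquely) to an algebra of the PROP $\mathbb{F}\otimes \mathbb{G}$ precisely when the isomorphisms coming from permutations $\sigma\in \mathrm{Hom}_{\mathbb{F}}\left(\ul{n}, \ul{n}\right)$ coincide with the symmetry isomorphisms of $\mathbf{S}$; this is automatic for a commutative monoid in a symmetric monoidal category, so the passage is lossless. On morphisms, \cite[3.12]{Lack} gives that a map in $\mathbf{S}$ respects the $\left(\mathbb{F}\otimes \mathbb{G}\right)$-algebra structure if and only if it respects both the $\mathbb{F}$-algebra structure (commutative monoid homomorphism) and the $\mathbb{G}$-algebra structure ($G$-equivariance), which is exactly what is meant by a morphism of $G$-commutative monoids; this establishes part (1).

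For part (2) I would copy the dualization argument from the proof of Proposition \ref{comp-prop} verbatim, replacing $\mathbb{D}\otimes \mathbb{P}\otimes \mathbb{G}$ by $\mathbb{F}\otimes \mathbb{G}$: use $\mathrm{Fun}(\mathbf{C}^{op},\mathbf{D}^{op})\cong \mathrm{Fun}(\mathbf{C},\mathbf{D})^{op}$ together with $G\cong G^{op}$ to identify $G$-cocommutative comonoids in $\mathbf{S}$ with the opposite of the category of $G$-commutative monoids in $\mathbf{S}^{op}$, and then apply part (1) to that opposite category. The main obstacle I anticipate is the verification of the compatibility condition in the first bullet; although conceptually the same as for $\mathbb{D}\otimes \mathbb{P}\otimes \mathbb{G}$, one has to check carefully that the distributive law $\left(\mathbf{x}, f\right)\mapsto \left(f, \mathbf{x}^{\prime}\right)$ with $\mathbf{x}^{\prime} = \left(g_{f(1)},\dotsc ,g_{f(n)}\right)$, applied to the generating morphisms of $\mathbb{F}$, really recovers the axioms of a $G$-action by commutative monoid maps, and Remark \ref{decomp-rem} (or rather its analogue for $\mathbb{F}$) is the right device to reduce this to finitely many identities.
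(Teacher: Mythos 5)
Your proposal is correct and follows essentially the same route as the paper, which simply states that the proof runs by the method of Proposition \ref{comp-prop} with the observation that an $\mathbb{F}$-algebra is a commutative monoid rather than a monoid; your write-up is a faithful (and more detailed) expansion of exactly that argument, including the dualization step for part (2).
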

\begin{proof}
The proof follows the same method as Proposition \ref{comp-prop}, noting that an algebra for the PROP $\mathbb{F}$ is a commutative monoid, rather than a monoid.
\end{proof}

Recall from \cite[Definition 6.1]{JS} that a braided monoidal category $\mathbf{B}$ is called \emph{balanced} if for each object $A$ we have a \emph{full twist isomorphism} $\Theta_A\colon A\rightarrow A$, such that the full twist on the unit object is the identity and such that the full twists are compatible with the braiding. The category of braids $\mathbb{B}$, the category of ribbon braids $\RB$ and the category of labelled ribbon braids $\RB\otimes \mathbb{G}$ are all examples of balanced braided monoidal categories. Furthermore, any symmetric monoidal category is balanced. We can use the PROB of ribbon braids to identify the extra structure present in balanced braided monoidal categories. 

\begin{prop}
\label{balanced-prop}
Let $\mathbf{B}$ be a balanced braided monoidal category and let $G$ be a group. There are equivalences of categories 
\begin{enumerate}
\item $\mathbf{Alg}\left( \mathbb{D}\otimes \RB , \mathbf{B}\right) \simeq \mathbf{Mon}\left(\mathbf{B}\right)$;
\item $\mathbf{Alg}\left( \mathbb{D}\otimes \RB\otimes \mathbb{G} , \mathbf{B}\right) \simeq \mathbf{GMon}\left(\mathbf{B}\right)$;
\item $\mathbf{Alg}\left( \left(\mathbb{D}\otimes \RB\right)^{op} , \mathbf{B}\right) \simeq \mathbf{Comon}\left(\mathbf{B}\right)$ and
\item $\mathbf{Alg}\left( \left(\mathbb{D}\otimes \RB\otimes \mathbb{G}\right)^{op} , \mathbf{B}\right) \simeq \mathbf{GComon}\left(\mathbf{B}\right)$.
\end{enumerate}
\end{prop}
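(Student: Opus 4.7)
The plan is to parallel Proposition \ref{comp-prop} with $\RB$ in place of $\mathbb{B}$, using the balanced structure on $\mathbf{B}$ to accommodate the extra ribbon (i.e.\ full twist) data. The essential new ingredient, beyond what was used in Proposition \ref{comp-prop}, is that $\RB$ is the free balanced braided strict monoidal category on one object \cite[Example 6.4]{JS}. Concretely, via the identification $RB_n\cong \mathbb{Z}^n\rtimes B_n$, a balanced braided strict monoidal functor from $\RB$ to any balanced braided $\mathbf{B}$ is determined by a single object $M$: the elementary braids must be sent to the braiding, and the generator of each $\mathbb{Z}$-factor must be sent to the full twist $\Theta_M$.

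For part (1), let $U(\mathbb{D}\otimes\RB)$ denote the underlying PRO. By \cite[3.10]{Lack}, a $U(\mathbb{D}\otimes\RB)$-algebra structure on an object $M$ of $\mathbf{B}$ consists of a $\mathbb{D}$-algebra structure (a monoid) together with an $\RB$-algebra structure, subject to the distributive compatibility. The braided analogue \cite[5.10]{DMG-PROB} of \cite[5.5]{Lack} shows that such an algebra lifts to a $(\mathbb{D}\otimes\RB)$-algebra precisely when the elementary braids act by the braiding of $\mathbf{B}$ and the full twist acts by $\Theta_M$. Since $\mathbf{B}$ is balanced this lift is automatic and unique, so the data collapses to that of a monoid in $\mathbf{B}$; morphisms are handled as in \cite[3.12]{Lack} and its braided version.

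For part (2), I would mimic the proof of Proposition \ref{comp-prop}(2) verbatim with $\RB\otimes\mathbb{G}$ replacing $\mathbb{B}\otimes\mathbb{G}$. A $(\RB\otimes\mathbb{G})$-algebra on $M$ is an $\RB$-algebra (hence the balanced braided data) together with a morphism $g\colon M\to M$ for each $g\in G$, assembled via the semi-direct product rule, and the compatibility in the distributive law $(\mathbb{D}\otimes\RB\otimes\mathbb{G})\otimes\mathbb{D}\to\mathbb{D}\otimes(\RB\otimes\mathbb{G})$ forces these $g$ to be monoid automorphisms. Parts (3) and (4) then follow by the same opposite-category trick as in Proposition \ref{comp-prop}: one has $\mathrm{Fun}(\C^{op},\mathbf{D}^{op})\cong\mathrm{Fun}(\C,\mathbf{D})^{op}$, and the opposite of a balanced braided monoidal category is balanced braided (with inverse braiding and inverse twist), so (3) and (4) reduce to (1) and (2) applied to $\mathbf{B}^{op}$; for (4) one also uses $G\cong G^{op}$.

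The main obstacle is the freeness statement for $\RB$ as a balanced braided monoidal category on one generator, since everything else is a transcription of Proposition \ref{comp-prop}. The subsidiary step, namely that the distributive-law machinery of Lack and its braided extension \cite{DMG-PROB} really does apply to $\RB$ and $\RB\otimes\mathbb{G}$, has already been established in Section \ref{dist-law-sec} via the observation that $\{RB_n\}_n$ inherits a crossed simplicial group structure from the braid case with identity braids replaced by identity ribbons. Once these two ingredients are in place, the verifications of distributive compatibility and of morphism behaviour are formal and proceed exactly as in Proposition \ref{comp-prop}.
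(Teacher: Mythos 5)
Your proposal is correct and follows essentially the same route as the paper: the paper's proof likewise reduces the non-equivariant cases to the braided-monoid argument of \cite{DMG-PROB}, observing that the extra twist data in the ribbons corresponds precisely to the full twist isomorphisms of $\mathbf{B}$ (your freeness statement for $\RB$ from \cite[Example 6.4]{JS}), and then handles the equivariant and opposite cases exactly as in Proposition \ref{comp-prop}. Your write-up simply makes explicit the details that the paper leaves as ``similar''.
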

\begin{proof}
The non-equivariant cases are similar to \cite[6.2]{DMG-PROB} and \cite[6.3]{DMG-PROB}. The added data of twists in the ribbons corresponds precisely to applying the twist isomorphisms in $\mathbf{B}$. The equivariant cases are similar to Proposition \ref{comp-prop}.
\end{proof}

\section{Equivariant bimonoids in a braided monoidal category}
\label{braid-bimon-sec}
In this section we construct a PROB for $G$-bimonoids in a braided monoidal category. We do this by defining a distributive law between $\mathbb{D}\otimes \mathbb{B}\otimes \mathbb{G}$ and its opposite, that is between the PROBs for $G$-monoids and $G$-comonoids, and analysing the composite PROB using the machinery introduced in \cite[Section 5]{DMG-PROB}. 

\begin{defn}
For ease of notation, let $Q$ denote the category $\mathbb{D}\otimes \mathbb{B}\otimes \mathbb{G}$. Let $Q^{op}\otimes_{\mathbb{B}\otimes \mathbb{G}} Q$ denote the category of equivalence classes of cospans in $Q$. Let $Q\otimes_{\mathbb{B}\otimes \mathbb{G}} Q^{op}$ denote the category of equivalence classes of spans in $Q$.
\end{defn}

Following Remark \ref{decomp-rem} it suffices to define the necessary distributive law in terms of the morphisms $m\in \mathrm{Hom}_{\mathbb{D}}\left(\ul{2} , \ul{1}\right)$, $u\in \mathrm{Hom}_{\mathbb{D}}\left(\ul{0} , \ul{1}\right)$ and the morphisms of $\mathbb{B}\otimes \mathbb{G}$. In the following definition we extend the distributive law of \cite[7.1]{DMG-PROB} to take into account the labelling data. Observe that this distributive law is completely determined by the crossed simplicial group structure of $\mathbb{D}\otimes \mathbb{B}\otimes \mathbb{G}$

\begin{defn}
\label{dist-law-defn}
We define a distributive law of PROBs
\[Q^{op}\otimes_{\mathbb{B}\otimes \mathbb{G}} Q\rightarrow Q\otimes_{\mathbb{B}\otimes \mathbb{G}} Q^{op}\]
to be determined as follows
\begin{enumerate}
\item \label{braid1} For $g$, $h\in G^n\rtimes B_n$
\begin{center}
\begin{tikzcd}
\ul{n}\arrow[rr, "g"]&&\ul{n}&&\ul{n}\arrow[ll, "h",swap] & \mapsto & \ul{n}&&\ul{n}\arrow[ll,"{g^{-1}}",swap]\arrow[rr, "{h^{-1}}"]&&\ul{n}
\end{tikzcd}
\end{center}
\item \label{mixed} For $\phi \in \mathrm{Hom}_{\mathbb{D}}\left(\ul{n},\ul{m}\right)$ and $g\in G^m\rtimes B_m$
\begin{center}
\begin{tikzcd}
\ul{n}\arrow[rr, "{\phi}"]&&\ul{m}&&\ul{m}\arrow[ll, "g",swap] & \mapsto  & \ul{n}&&\ul{n}\arrow[ll,"{g_{\star}(\phi)}",swap]\arrow[rr, "{\phi^{\star}(g)}"]&&\ul{m}\\
\ul{m}\arrow[rr, "{g}"]&&\ul{m}&&\ul{n}\arrow[ll, "{\phi}",swap] & \mapsto  & \ul{m}&&\ul{n}\arrow[ll,"{\phi^{\star}(g)}",swap]\arrow[rr, "{g_{\star}(\phi)}"]&&\ul{n}
\end{tikzcd}
\end{center}
where $g_{\star}(\phi)$ and $\phi^{\star}(g)$ are the assignments from the crossed simplicial group structure of $\mathbb{D}\otimes \mathbb{B}\otimes \mathbb{G}$.
\item \label{ord-pres1} For $m\in \mathrm{Hom}_{\mathbb{D}}\left(\ul{2},\ul{1}\right)$ and $u\in \mathrm{Hom}_{\mathbb{D}}\left(\ul{0},\ul{1}\right)$
\begin{center}
\begin{tikzcd}
\ul{2}\arrow[rr, "m"]&&\ul{1}&&\ul{2}\arrow[ll, "m",swap] & \mapsto  & \ul{2}&&\ul{4}\arrow[ll,"{m^{\amalg 2}\circ \sigma_{2,3}}",swap]\arrow[rr, "{m^{\amalg 2}}"]&&\ul{2}\\
\ul{0}\arrow[rr, "u"]&&\ul{1}&&\ul{0}\arrow[ll, "u",swap] & \mapsto & \ul{0}&&\ul{0}\arrow[ll,"{id_0}",swap]\arrow[rr, "{id_0}"]&&\ul{0}\\
\ul{2}\arrow[rr, "m"]&&\ul{1}&&\ul{0}\arrow[ll, "u",swap] & \mapsto & \ul{2}&&\ul{0}\arrow[ll,"{u^2}",swap]\arrow[rr, "{id_0}"]&&\ul{0}\\
\ul{0}\arrow[rr, "u"]&&\ul{1}&&\ul{2}\arrow[ll, "m",swap] & \mapsto & \ul{0}&&\ul{0}\arrow[ll,"{id_0}",swap]\arrow[rr, "{u^2}"]&&\ul{2}
\end{tikzcd}
\end{center}
where $\sigma_{2,3}$ is notation for $id_1\amalg \sigma \amalg id_1$ and $\sigma$ is the generator of the braid group $B_2$.
\end{enumerate}
\end{defn}

\begin{thm}
\label{bimon-thm}
Let $\mathbf{B}$ be a braided monoidal category. Let $G$ be a group. There is an equivalence of categories $\mathbf{Alg}\left(\mathbb{D} \otimes \left(\mathbb{B}\otimes \mathbb{G}\right)\otimes \mathbb{D}^{op} , \mathbf{B}\right) \simeq \mathbf{GBimon}\left(\mathbf{B}\right)$.
\end{thm}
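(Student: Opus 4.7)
The plan is to realize $\mathbb{D}\otimes(\mathbb{B}\otimes \mathbb{G})\otimes \mathbb{D}^{op}$ as the composite PROB arising from the distributive law of Definition \ref{dist-law-defn}, and then use the machinery of \cite[Section 5]{DMG-PROB} to decode its algebras. By Proposition \ref{comp-prop}(2) and (4), algebras for $Q = \mathbb{D}\otimes\mathbb{B}\otimes\mathbb{G}$ are $G$-monoids in $\mathbf{B}$ and algebras for $Q^{op}$ are $G$-comonoids in $\mathbf{B}$, and both contain $\mathbb{B}\otimes\mathbb{G}$ as a common sub-PROB encoding the braiding together with the $G$-action. The standard distributive-law argument (compare \cite[5.5]{Lack} and \cite[7.2]{DMG-PROB}) then says that an algebra for the composite is precisely an object $M$ carrying both a $G$-monoid and a $G$-comonoid structure, such that the structure maps satisfy the relations imposed on generators by the assignments in Definition \ref{dist-law-defn}.

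First I would verify that Definition \ref{dist-law-defn} really does define a distributive law of PROBs in the sense of \cite[Section 5]{DMG-PROB}; by Remark \ref{decomp-rem} every morphism in $Q$ is built out of $m$, $u$ and elements of $\mathbb{B}\otimes\mathbb{G}$, so it suffices to check the middle-four-exchange condition on these generators and on the monoidal-structure interaction, extending the unlabelled check of \cite[7.1]{DMG-PROB} by tracking the $G$-labels under $\phi^\star$ and $g_\star$. This extension is essentially forced since, as noted just before Definition \ref{dist-law-defn}, the mixed clause (\ref{mixed}) is already dictated by the crossed simplicial group structure of $\mathbb{D}\otimes\mathbb{B}\otimes\mathbb{G}$.

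Next I would translate clauses (\ref{braid1})--(\ref{ord-pres1}) into equations in $\mathbf{B}$. Clause (\ref{braid1}) is automatic, recording that the inverse of an automorphism in $\mathbb{B}\otimes\mathbb{G}$ is its inverse in $(\mathbb{B}\otimes\mathbb{G})^{op}$. Clause (\ref{mixed}) is precisely the statement that each element $g \in G$ commutes with both the multiplication and the comultiplication, i.e.\ that $G$ acts by bimonoid homomorphisms, once one unpacks $\phi^\star$ and $g_\star$ on the generators $m$ and $u$. Clause (\ref{ord-pres1}) then recovers exactly the four non-equivariant bimonoid axioms: the compatibility $\Delta \circ \mu = (\mu \otimes \mu) \circ (\mathrm{id}\otimes \sigma \otimes \mathrm{id})\circ(\Delta\otimes \Delta)$ with the braid $\sigma$ in the middle, the unit-counit identity, and the two mixed unit/counit axioms. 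Together these are precisely the data and axioms of a $G$-bimonoid in $\mathbf{B}$, and the morphism-level check (using \cite[5.10]{DMG-PROB} in place of \cite[3.12]{Lack}, exactly as in Proposition \ref{comp-prop}) identifies morphisms of algebras with morphisms of $G$-bimonoids.

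The main obstacle I anticipate is the well-definedness of the distributive law, specifically verifying the middle-four-exchange axiom for the mixed clause (\ref{mixed}) when $\phi$ is neither $m$ nor $u$ but a general order-preserving map with a nontrivial $g \in G^m\rtimes B_m$ on the right. Because the braid part of $g$ may produce crossings whose strands are labelled by different group elements, one has to check carefully that $g_\star(\phi)$ and $\phi^\star(g)$ interact coherently with the distributive law of the underlying crossed simplicial group $\mathbb{D}\otimes\mathbb{B}$. Once this compatibility is in hand, the remaining verifications reduce to the generator-level identities already carried out in \cite[7.2]{DMG-PROB}, specialized by the observation that taking $G$ trivial recovers that theorem.
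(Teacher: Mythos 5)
Your proposal is correct and follows essentially the same route as the paper: build the composite PROB $Q\otimes_{\mathbb{B}\otimes\mathbb{G}}Q^{op}$ from the distributive law of Definition \ref{dist-law-defn} via \cite[5.9, 5.10]{DMG-PROB}, identify it with $\mathbb{D}\otimes(\mathbb{B}\otimes\mathbb{G})\otimes\mathbb{D}^{op}$ by noting the common sub-PROB $\mathbb{B}\otimes\mathbb{G}$ of isomorphisms is identified when passing to equivalence classes of spans, and read off the compatibility conditions as the $G$-bimonoid axioms. Your explicit translation of the clauses into the bimonoid axioms and the equivariance of $\mu$ and $\Delta$ is more detailed than the paper's proof, but it is the same argument.
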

\begin{proof}
The distributive law of Definition \ref{dist-law-defn} implies that $Q\otimes_{\mathbb{B}\otimes \mathbb{G}} Q^{op}$ is a composite PROB by \cite[5.9]{DMG-PROB}. By \cite[5.10]{DMG-PROB} a $\left(Q\otimes_{\mathbb{B}\otimes \mathbb{G}} Q^{op}\right)$-algebra structure for an object $M$ in $\mathbf{B}$ is a $Q$-algebra structure, that is a $G$-monoid structure, and a $Q^{op}$-algebra structure, that is a $G$-comonoid structure, subject to compatibility conditions arising from the distributive law of Definition \ref{dist-law-defn}. These compatibility conditions are precisely those requiring $M$ to be a $G$-bimonoid. Finally we observe that a morphism in $\mathbf{B}$ is a morphism of $G$-bimonoids if and only if it preserves the $Q$-algebra structure and the $Q^{op}$-algebra structure. By \cite[5.10]{DMG-PROB} this is true if and only if it preserves the $\left(Q\otimes_{\mathbb{B}\otimes \mathbb{G}} Q^{op}\right)$-algebra structure.

The isomorphisms in $Q$ and $Q^{op}$ are the labelled braid groups. Therefore, when we take equivalence classes of spans we are identifying the two group actions. We therefore have an isomorphism of PROBs 
\[Q\otimes_{\mathbb{B}\otimes \mathbb{G}} Q^{op} \cong \mathbb{D} \otimes \left(\mathbb{B}\otimes \mathbb{G}\right)\otimes \mathbb{D}^{op}\]
from which the result follows.
\end{proof}

\begin{thm}
Let $\mathbf{B}$ be a balanced braided monoidal category. There are equivalences of categories 
\begin{enumerate}
\item $\mathbf{Alg}\left(\mathbb{D} \otimes \RB\otimes \mathbb{D}^{op} , \mathbf{B}\right) \simeq \mathbf{Bimon}\left(\mathbf{B}\right)$.
\item $\mathbf{Alg}\left(\mathbb{D} \otimes \left(\RB\otimes \mathbb{G}\right)\otimes \mathbb{D}^{op} , \mathbf{B}\right) \simeq \mathbf{GBimon}\left(\mathbf{B}\right)$.
\end{enumerate}
\end{thm}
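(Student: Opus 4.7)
The plan is to reproduce the argument of Theorem \ref{bimon-thm} verbatim, but with the PROBs $\mathbb{D}\otimes \RB$ and $\mathbb{D}\otimes \RB\otimes \mathbb{G}$ in place of $\mathbb{D}\otimes \mathbb{B}$ and $\mathbb{D}\otimes \mathbb{B}\otimes \mathbb{G}$ respectively. I treat part (2) in detail; part (1) is the same argument with the trivial group. Write $\widetilde{Q}$ for the category $\mathbb{D}\otimes \RB \otimes \mathbb{G}$. Since $\RB \otimes \mathbb{G}$ is the groupoid of isomorphisms in $\widetilde{Q}$, I can form the categories $\widetilde{Q}^{op}\otimes_{\RB\otimes \mathbb{G}} \widetilde{Q}$ and $\widetilde{Q}\otimes_{\RB\otimes \mathbb{G}} \widetilde{Q}^{op}$ of cospans and spans exactly as before.

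First I would write down the distributive law
\[
\widetilde{Q}^{op}\otimes_{\RB\otimes \mathbb{G}} \widetilde{Q}\longrightarrow \widetilde{Q}\otimes_{\RB\otimes \mathbb{G}} \widetilde{Q}^{op}
\]
by copying Definition \ref{dist-law-defn} literally: on a pair of isomorphisms $(g,h)$ with $g,h\in \mathrm{Hom}_{\RB\otimes \mathbb{G}}(\ul n,\ul n) = (\mathbb{Z}^n\rtimes B_n)\ltimes G^n$, send the cospan $(g,h)$ to the span $(g^{-1},h^{-1})$; on a mixed pair $(\phi,g)$ use the assignments $g_\star(\phi)$, $\phi^\star(g)$ supplied by the crossed simplicial group structure of $\mathbb{D}\otimes \RB \otimes \mathbb{G}$ established in Section \ref{dist-law-sec}; and on the generating pairs $(m,m)$, $(u,u)$, $(m,u)$, $(u,m)$ use the same formulas as in Definition \ref{dist-law-defn}\eqref{ord-pres1}, where the braid $\sigma_{2,3}$ is now reinterpreted as the ribbon braid coming from the canonical inclusion $B_n\hookrightarrow RB_n$ of untwisted ribbons. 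By Remark \ref{decomp-rem}, extended to this crossed simplicial group, these assignments extend uniquely to a well-defined distributive law of PROBs, and verifying the axioms of \cite[2.4]{RW} reduces to the verification already carried out in \cite[7.1]{DMG-PROB} together with the straightforward check that the ribbon-twist generators in $\RB$ behave formally like the labels in $\mathbb{G}$ under the crossed simplicial structure.

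Given the distributive law, I invoke \cite[5.9]{DMG-PROB} to conclude that $\widetilde{Q}\otimes_{\RB\otimes \mathbb{G}} \widetilde{Q}^{op}$ is a composite PROB, and \cite[5.10]{DMG-PROB} to identify its algebras. A $\widetilde{Q}$-algebra in the balanced braided category $\mathbf{B}$ is a $G$-monoid by Proposition \ref{balanced-prop}(2), a $\widetilde{Q}^{op}$-algebra is a $G$-comonoid by Proposition \ref{balanced-prop}(4), and the compatibility relations forced by the generating pairs in Definition \ref{dist-law-defn}\eqref{ord-pres1} are precisely the bimonoid axioms; the only subtlety compared with the braided case is that the ``symmetry'' $\sigma_{2,3}$ in the target span is now the ribbon braid, but the twist information is absorbed consistently by the twist isomorphisms $\Theta_A$ that already encode the balanced structure, thanks to Proposition \ref{balanced-prop}. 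The morphism statement follows in the same way from \cite[5.10]{DMG-PROB}.

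Finally, because the isomorphisms in $\widetilde{Q}$ are precisely the labelled ribbon braid groups, taking equivalence classes of spans identifies the two copies of $\RB\otimes \mathbb{G}$ and yields an isomorphism of PROBs
\[
\widetilde{Q}\otimes_{\RB\otimes \mathbb{G}} \widetilde{Q}^{op}\;\cong\;\mathbb{D}\otimes (\RB\otimes \mathbb{G})\otimes \mathbb{D}^{op},
\]
from which part (2) follows. Part (1) is obtained by taking $G$ trivial throughout. The main obstacle I anticipate is the verification that the distributive law is well defined: one must check that the assignments on generators respect the ribbon relations (in particular that the ribbon-twist generator commutes correctly through $m$ and $u$ in the span/cospan picture), and that the compatibility this produces on a $\widetilde{Q}$- and $\widetilde{Q}^{op}$-algebra really is the bimonoid axiom rather than a stronger ``twisted'' variant. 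Both checks should reduce to the corresponding computations in \cite[7.1]{DMG-PROB} plus the observation that, on algebras in a balanced braided category, the generator of the free cyclic summand of $RB_1$ acts as the full twist $\Theta$ and is absorbed by the balanced structure.
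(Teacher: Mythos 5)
Your proposal is correct and follows essentially the same route as the paper: replace the (labelled) braids in Definition \ref{dist-law-defn} by (labelled) ribbon braids, use the crossed simplicial group structure of $\mathbb{D}\otimes\RB\otimes\mathbb{G}$ for the mixed pairs, and then run the argument of Theorem \ref{bimon-thm} with Proposition \ref{balanced-prop} supplying the identification of $\widetilde{Q}$- and $\widetilde{Q}^{op}$-algebras with $G$-monoids and $G$-comonoids in the balanced setting. Your additional remarks on where the verification could fail (the ribbon-twist generator commuting through $m$ and $u$, and the twist being absorbed by $\Theta$) are exactly the checks the paper leaves implicit.
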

\begin{proof}
We define appropriate distributive laws by replacing the labelled braids in Definition \ref{dist-law-defn} by the ribbon braids and the labelled ribbon braids respectively, and using the composition in the categories $\mathbb{D}\otimes \RB$ and $\mathbb{D}\otimes \RB \otimes \mathbb{G}$. The proof then follows similarly to Theorem \ref{bimon-thm}, making use of Proposition \ref{balanced-prop}.
\end{proof}

\section{Equivariant bimonoids in a symmetric monoidal category}
\label{symm-bimon-sec}
In this section we construct a PROP for $G$-bimonoids in a symmetric monoidal category. We begin by stating this result directly, following Theorem \ref{bimon-thm}. However, in the symmetric monoidal case we have an interesting alternative description of this PROP in terms of non-commutative sets after the fashion of \cite{Pir-PROP} and \cite{DMG-IFAS}. We also include results for $G$-commutative and $G$-cocommutative bimonoids.

\subsection{A composite PROP for equivariant bimonoids}
We begin by stating the symmetric monoidal analogue of Theorem \ref{bimon-thm}.

\begin{thm}
\label{comm-bimon-thm}
Let $\mathbf{S}$ be a symmetric monoidal category. Let $G$ be a group. There is an equivalence of categories $\mathbf{Alg}\left(\mathbb{D} \otimes \left(\mathbb{P}\otimes \mathbb{G}\right)\otimes \mathbb{D}^{op} , \mathbf{S}\right) \simeq \mathbf{GBimon}\left(\mathbf{S}\right)$.
\end{thm}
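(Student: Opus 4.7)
The plan is to mirror the proof of Theorem \ref{bimon-thm}, working now in a symmetric monoidal setting. First I would set $Q := \mathbb{D}\otimes \mathbb{P}\otimes \mathbb{G}$, the PROP for $G$-monoids, and define a distributive law of PROPs $Q^{op}\otimes_{\mathbb{P}\otimes \mathbb{G}} Q \to Q\otimes_{\mathbb{P}\otimes \mathbb{G}} Q^{op}$ by transcribing Definition \ref{dist-law-defn} verbatim: replace each labelled braid $g \in G^n \rtimes B_n$ by its counterpart $g \in G^n \rtimes \Sigma_n$, and replace the braid generator $\sigma$ of $B_2$ appearing in clause (3) by the transposition generator of $\Sigma_2$. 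The assignments $g_\star(\phi)$ and $\phi^\star(g)$ are now those coming from the crossed simplicial group structure of $\mathbb{D}\otimes \mathbb{P}\otimes \mathbb{G}$ recalled in Section \ref{dist-law-sec}, so the definition is canonical, and by Remark \ref{decomp-rem} it extends uniquely to all morphisms.

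Next I would apply the PROP-theoretic machinery of \cite{Lack}, which served as the model for \cite[Section 5]{DMG-PROB}. By \cite[Section 3]{Lack} the distributive law yields a composite PROP $Q\otimes_{\mathbb{P}\otimes \mathbb{G}} Q^{op}$, and by \cite[3.10, 3.12]{Lack} together with \cite[5.5]{Lack} an algebra structure in $\mathbf{S}$ on an object $M$ amounts to a $Q$-algebra structure and a $Q^{op}$-algebra structure on $M$ satisfying the compatibility dictated by the distributive law. Proposition \ref{comp-prop} identifies these two structures with a $G$-monoid and a $G$-comonoid structure respectively, while clauses (1)--(3) of the transcribed distributive law unpack to precisely the middle-four interchange, the unit/counit compatibilities, and the requirement that the $G$-action respect both the multiplication and the comultiplication. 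The same criterion applied to morphisms shows that the equivalence extends to arrows. Finally, since the isomorphisms in both $Q$ and $Q^{op}$ coincide with the labelled permutation groupoid $\mathbb{P}\otimes \mathbb{G}$, passing to equivalence classes of spans over this groupoid collapses the two actions, yielding an isomorphism $Q\otimes_{\mathbb{P}\otimes \mathbb{G}} Q^{op} \cong \mathbb{D} \otimes (\mathbb{P}\otimes \mathbb{G}) \otimes \mathbb{D}^{op}$, exactly as in the final paragraph of the proof of Theorem \ref{bimon-thm}.

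The main obstacle is verifying that the transcribed distributive law satisfies the axioms of \cite[2.4]{RW} and is compatible with the symmetric monoidal structure. I expect this to be routine, however, since the surjection $\mathbb{B}\twoheadrightarrow\mathbb{P}$ sends every identity verified for labelled braids in Theorem \ref{bimon-thm} to the analogous identity for labelled permutations, and no genuinely new calculation is needed. The only conceptual change relative to the braided case is that algebras now factor through the symmetric monoidal structure of $\mathbf{S}$, allowing us to invoke \cite{Lack} directly rather than its braided analogue from \cite{DMG-PROB}.
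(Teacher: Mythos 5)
Your proposal is correct and follows essentially the same route as the paper: the paper's proof likewise defines the distributive law by replacing labelled braids with labelled permutations in Definition \ref{dist-law-defn}, then repeats the argument of Theorem \ref{bimon-thm} with Lack's PROP-composition results (\cite[4.4, 4.8]{Lack}) substituted for the PROB versions from \cite{DMG-PROB}. The only cosmetic difference is which of Lack's numbered results you cite for the algebra-decomposition step, which does not affect the argument.
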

\begin{proof}
We define the necessary distributive law by replacing the labelled braids by labelled permutations in Definition \ref{dist-law-defn} and using the crossed simplicial group structure in $\mathbb{D}\otimes \mathbb{P}\otimes \mathbb{G}$. The proof then follows the same argument of Theorem \ref{bimon-thm}, using \cite[4.4,\, 4.8]{Lack} in place of \cite[5.9,\, 5.10]{DMG-PROB}.
\end{proof}

\subsection{Non-commutative sets}
The category of non-commutative sets $\fas$, originally defined in \cite[A10]{FT}, is used in \cite{Pir-PROP} to construct PROPs for monoids, comonoids and bimonoids in a symmetric monoidal category. The objects are the sets $\ul{n}$ for $n\geqslant 0$ and morphisms are maps of sets with total-ordering data on the preimages. In this subsection we extend the definition of non-commutative sets to include labelling data from a group $G$ on the pre-existing preimage data in order to encode the added structure of a group action.

\begin{defn}
Let $G$ be a group. We define an \emph{ordered $G$-labelled set} to be a finite, ordered subset of the sets $\ul{n}$, whose elements are adorned with superscript labels from $G$. We define an action of $G$ on such sets by
\[g\ast \llb j_1^{\alpha_{j_1}}<\cdots <j_r^{\alpha_{j_r}}\rrb =\llb j_1^{g\alpha_{j_1}}<\cdots <j_r^{g\alpha_{j_r}}\rrb.\]
In other words, we multiply the labels by $g$ on the left.

If we remove the ordering, we call these objects \emph{$G$-labelled sets}. Note that we have a similar action of $G$ for these objects.  
\end{defn}

\begin{defn}
Let $G$ be a group. The \emph{PROP of $G$-labelled non-commutative sets}, denoted $G\fas$, has as objects the sets $\ul{n}$ for $n\geqslant 0$. An element $f\in \mathrm{Hom}_{G\fas}\left(\ul{n} , \ul{m}\right)$ is a map of finite sets such that for each $i \in \ul{m}$, $f^{-1}(i)$ is an ordered $G$-labelled set. In particular, $\mathrm{Hom}_{G\fas}\left(\ul{0} , \ul{m}\right)$ is the singleton set containing the map $\emptyset \rightarrow \ul{m}$ and $\mathrm{Hom}_{G\fas}\left(\ul{n} , \ul{0}\right)$ is the empty set. For $f_1 \in \mathrm{Hom}_{G\fas}\left(\ul{n} , \ul{m}\right)$ and $f_2 \in \mathrm{Hom}_{G\fas}\left(\ul{m} , \ul{l}\right)$, the composite $f_2\bullet f_1$ in $G\fas$ is the composite of the underlying set maps $f_2\circ f_1$ with 
\[\left(f_2\bullet f_1\right)^{-1}(i) = \coprod_{j^{\alpha_j}\in f_2^{-1}(i)} \left(\alpha_j \ast f_1^{-1}(j)\right).\]
The symmetry isomorphisms are given by the block permutations.
\end{defn}

\begin{defn}
Let $G\mathcal{F}$ be the category whose objects are the sets $\ul{n}$. A morphism in $G\mathcal{F}$ is a map of sets such that the preimage of each singleton in the codomain is a $G$-labelled set. Composition is given by composition of set maps and multiplication of labels.
\end{defn}

\begin{prop}
\label{PROP-iso-prop}
There are isomorphisms of PROPs 
\begin{enumerate}
\item $G\fas \cong \mathbb{D}\otimes \mathbb{P}\otimes \mathbb{G}$ and
\item $G\mathcal{F}\cong \mathbb{F}\otimes \mathbb{G}$.
\end{enumerate}
\end{prop}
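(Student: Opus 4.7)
The plan is to prove both isomorphisms by constructing explicit bijections on morphism sets and verifying compatibility with composition and the symmetric monoidal structure. I focus on part (1); part (2) is parallel but simpler as there is no ordering data to track.

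For (1), I would first recall (or verify directly, following Pirashvili \cite{Pir-PROP}) the non-equivariant isomorphism $\fas \cong \mathbb{D}\otimes\mathbb{P}$: a morphism in $\fas$ from $\ul{n}$ to $\ul{m}$ factors uniquely as an order-preserving map $\phi$ precomposed with a permutation $\sigma$, where $\sigma$ lists the elements of $\ul{n}$ by first taking those mapping to $1$ in their prescribed order, then those mapping to $2$, and so on. I would then promote this bijection by attaching $G$-labels: a triple $(\phi,\sigma,(g_1,\ldots,g_n))$ in $\mathbb{D}\otimes\mathbb{P}\otimes\mathbb{G}$ should correspond to the $G\fas$-morphism with underlying set map $\phi\circ\sigma$ and with preimage $(\phi\circ\sigma)^{-1}(i)$ equal to the ordered $G$-labelled set consisting of the elements $\sigma(k)^{g_k}$ indexed by those $k$ with $\phi(k)=i$, in the natural order of $k$. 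This is manifestly a bijection on morphism sets.

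The main verification is compatibility with composition. In $G\fas$, the composite $f_2\bullet f_1$ has preimages $(f_2\bullet f_1)^{-1}(i)=\coprod_{j^{\alpha_j}\in f_2^{-1}(i)}\left(\alpha_j\ast f_1^{-1}(j)\right)$, so each label in the preimages of $f_1$ is left-multiplied by the label of its image under $f_2$. On the $\mathbb{D}\otimes\mathbb{P}\otimes\mathbb{G}$ side, composition is governed by the distributive law $(\mathbb{P}\otimes\mathbb{G})\otimes\mathbb{D}\to\mathbb{D}\otimes(\mathbb{P}\otimes\mathbb{G})$ of the labelled symmetric crossed simplicial group of \cite[Theorem 3.10]{FL}, together with the internal composition of $\mathbb{P}\otimes\mathbb{G}=G^n\rtimes\Sigma_n$. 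The main obstacle is matching the action $\phi^{\star}$ on labelled permutations with the left-multiplication $\alpha_j\ast(-)$ appearing in the $G\fas$-composition; this requires careful index tracking but reduces to checking that on each $k\in\ul{n}$ both constructions produce the same composite label $\alpha_{f_1(k)}\cdot g_k$. By Remark \ref{decomp-rem}, it suffices to verify this on the generators $m\in\mathrm{Hom}_{\mathbb{D}}(\ul{2},\ul{1})$, $u\in\mathrm{Hom}_{\mathbb{D}}(\ul{0},\ul{1})$, and the morphisms of $\mathbb{P}\otimes\mathbb{G}$, which is a finite and explicit case-check.

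Compatibility with the monoidal structure and with the symmetry isomorphisms is routine: the tensor product acts by disjoint union of order-preserving maps, concatenation of label tuples, and block-sum of permutations on both sides, and the symmetry on each side is given by block permutations. Part (2) proceeds along the same lines with no permutation or ordering data: the bijection sends $(f,(g_1,\ldots,g_n))\in\mathrm{Hom}_{\mathbb{F}\otimes\mathbb{G}}(\ul{n},\ul{m})$ to the set map $f$ with label $g_k$ attached to $k\in f^{-1}(f(k))$, and the composition formula in $G\mathcal{F}$ matches the distributive law formula $\mathbf{x}^\prime=(g_{f(1)},\ldots,g_{f(n)})$ from Section \ref{dist-law-sec} directly, so no further index bookkeeping is needed beyond what appears in part (1).
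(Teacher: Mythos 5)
Your proposal is correct and follows essentially the same route as the paper, which simply delegates to the explicit-bijection argument of \cite[Proposition 3.11]{DMG-IFAS} (itself modelled on Pirashvili's identification of $\fas$ with $\mathbb{D}\otimes\mathbb{P}$): decompose each morphism of $G\fas$ as an order-preserving map composed with a labelled permutation and match composition with the crossed simplicial group structure of \cite[Theorem 3.10]{FL}. The only point to watch in a full writeup is the $\sigma$ versus $\sigma^{-1}$ bookkeeping in your description of the preimages of $\phi\circ\sigma$ (the element of $\left(\phi\circ\sigma\right)^{-1}(i)$ corresponding to $k\in\phi^{-1}(i)$ is $\sigma^{-1}(k)$ rather than $\sigma(k)$), which is precisely the index-tracking you already flag as the delicate step.
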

\begin{proof}
The proof of the first isomorphism follows \cite[Proposition 3.11]{DMG-IFAS}, with $C_2$ replaced by $G$, the PROP of hyperoctahedral groups $\mathbb{H}$ replaced with the PROP $\mathbb{P}\otimes \mathbb{G}$ and using the crossed simplicial group of \cite[Theorem 3.10]{FL} in place of \cite[3.1]{FL}. The second isomorphism is similar.
\end{proof}

\begin{cor}
\label{symm-bimon-cor}
Let $\mathbf{S}$ be a symmetric monoidal category. Let $G$ be a group. There are equivalences of categories
\begin{enumerate}
\item $\mathbf{Alg}\left(G\fas , \mathbf{S}\right) \simeq \mathbf{GMon}\left(\mathbf{S}\right)$,
\item $\mathbf{Alg}\left(G\fas^{op} , \mathbf{S}\right) \simeq \mathbf{GComon}\left(\mathbf{S}\right)$ and
\item $\mathbf{Alg}\left(G\fas \otimes_{\mathbb{P}\otimes \mathbb{G}} G\fas^{op} , \mathbf{S}\right) \simeq \mathbf{GBimon}\left(\mathbf{S}\right)$.
\end{enumerate}
\end{cor}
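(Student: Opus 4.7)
The plan is to deduce all three equivalences by transporting the results of Proposition \ref{comp-prop} and Theorem \ref{comm-bimon-thm} across the isomorphism of PROPs $G\fas \cong \mathbb{D}\otimes \mathbb{P}\otimes \mathbb{G}$ provided by Proposition \ref{PROP-iso-prop}. Since an equivalence of (strict symmetric monoidal) source categories induces an equivalence on the associated categories of algebras by precomposition, parts (1) and (2) are immediate: combining the isomorphism $G\fas \cong \mathbb{D}\otimes \mathbb{P}\otimes \mathbb{G}$ with Proposition \ref{comp-prop}(1) gives
\[\mathbf{Alg}\left(G\fas, \mathbf{S}\right) \simeq \mathbf{Alg}\left(\mathbb{D}\otimes \mathbb{P}\otimes \mathbb{G}, \mathbf{S}\right) \simeq \mathbf{GMon}\left(\mathbf{S}\right),\]
and, since $\left(-\right)^{op}$ is functorial on PROPs, the same isomorphism yields $G\fas^{op} \cong \left(\mathbb{D}\otimes \mathbb{P}\otimes \mathbb{G}\right)^{op}$, so Proposition \ref{comp-prop}(3) gives (2).

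For part (3) the plan is to show that the isomorphism of Proposition \ref{PROP-iso-prop}(1) identifies the span PROP $G\fas \otimes_{\mathbb{P}\otimes \mathbb{G}} G\fas^{op}$ with the composite PROP $\mathbb{D}\otimes \left(\mathbb{P}\otimes \mathbb{G}\right)\otimes \mathbb{D}^{op}$ built from the distributive law of Theorem \ref{comm-bimon-thm}. The key point is that the subgroupoid of isomorphisms in both $G\fas$ and $\mathbb{D}\otimes \mathbb{P}\otimes \mathbb{G}$ is $\mathbb{P}\otimes \mathbb{G}$, and the isomorphism of PROPs from Proposition \ref{PROP-iso-prop}(1) restricts to the identity on this common subgroupoid. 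Hence equivalence classes of spans over $\mathbb{P}\otimes \mathbb{G}$ transport directly across the isomorphism, yielding
\[G\fas \otimes_{\mathbb{P}\otimes \mathbb{G}} G\fas^{op} \;\cong\; \mathbb{D}\otimes \left(\mathbb{P}\otimes \mathbb{G}\right)\otimes \mathbb{D}^{op}.\]
Applying Theorem \ref{comm-bimon-thm} then produces the desired equivalence of categories of algebras.

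The step I expect to require the most care is the identification of the span PROP $G\fas \otimes_{\mathbb{P}\otimes \mathbb{G}} G\fas^{op}$ with the composite PROP of Theorem \ref{comm-bimon-thm}: strictly speaking one must verify that the distributive law used to form the latter agrees with the one implicitly governing composition of spans in the former, once both are pulled back along the isomorphism $G\fas \cong \mathbb{D}\otimes \mathbb{P}\otimes \mathbb{G}$. By Remark \ref{decomp-rem} it is enough to check this agreement on the generating morphisms $m$, $u$ and the elements of $\mathbb{P}\otimes \mathbb{G}$, which reduces to the crossed simplicial group identities already packaged into the proof of Proposition \ref{PROP-iso-prop}. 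Once this identification is in place the rest is bookkeeping, and the desired chain of equivalences
\[\mathbf{Alg}\left(G\fas \otimes_{\mathbb{P}\otimes \mathbb{G}} G\fas^{op}, \mathbf{S}\right) \simeq \mathbf{Alg}\left(\mathbb{D}\otimes \left(\mathbb{P}\otimes \mathbb{G}\right)\otimes \mathbb{D}^{op}, \mathbf{S}\right) \simeq \mathbf{GBimon}\left(\mathbf{S}\right)\]
completes the proof.
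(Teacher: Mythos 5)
Your proposal is correct and follows essentially the same route as the paper, whose proof simply cites Proposition \ref{comp-prop}, Proposition \ref{PROP-iso-prop} and Theorem \ref{comm-bimon-thm}; you have merely made explicit the transport of structure along the isomorphism $G\fas \cong \mathbb{D}\otimes \mathbb{P}\otimes \mathbb{G}$ and the resulting identification $G\fas \otimes_{\mathbb{P}\otimes \mathbb{G}} G\fas^{op} \cong \mathbb{D}\otimes \left(\mathbb{P}\otimes \mathbb{G}\right)\otimes \mathbb{D}^{op}$, which the paper leaves implicit.
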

\begin{proof}
This follows from Proposition \ref{comp-prop}, Proposition \ref{PROP-iso-prop} and Theorem \ref{comm-bimon-thm}.
\end{proof}

We will give an alternative proof of the final part of this corollary, using the theory of double categories. This machinery will also allow us to give nice descriptions of the PROPs for $G$-commutative and $G$-cocommutative bimonoids in a symmetric monoidal category. 

\begin{cor}
Let $\mathbf{S}$ be a symmetric monoidal category. Let $G$ be a group.
\begin{enumerate}
\item The category $\mathbf{Alg}\left(G\mathcal{F}, \mathbf{S}\right)$ is equivalent to the category of $G$-commutative monoids in $\mathbf{S}$;
\item the category $\mathbf{Alg}\left(G\mathcal{F}^{op}, \mathbf{S}\right)$ is equivalent to the category of $G$-cocommutative \newline comonoids in $\mathbf{S}$.
\end{enumerate}
\end{cor}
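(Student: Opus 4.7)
The plan is to deduce this corollary directly by combining two previously established results. The key observation is that part (2) of Proposition \ref{PROP-iso-prop} gives an isomorphism of PROPs $G\mathcal{F} \cong \mathbb{F} \otimes \mathbb{G}$, and Proposition \ref{commutative-prop} already identifies the categories of algebras of $\mathbb{F} \otimes \mathbb{G}$ and $(\mathbb{F} \otimes \mathbb{G})^{op}$ in $\mathbf{S}$ with $G$-commutative monoids and $G$-cocommutative comonoids respectively. So the corollary should follow formally from transporting algebras along the isomorphism.

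More concretely, for part (1) I would argue as follows. An isomorphism of PROPs $F\colon G\mathcal{F} \to \mathbb{F} \otimes \mathbb{G}$ induces, by precomposition, an isomorphism of categories of algebras
\[
F^{\ast}\colon \mathbf{Alg}\left(\mathbb{F}\otimes \mathbb{G}, \mathbf{S}\right) \xrightarrow{\;\cong\;} \mathbf{Alg}\left(G\mathcal{F}, \mathbf{S}\right),
\]
since a symmetric strict monoidal functor out of $\mathbb{F}\otimes \mathbb{G}$ corresponds (uniquely, via $F$) to one out of $G\mathcal{F}$, and natural transformations correspond as well. Composing this isomorphism with the equivalence of Proposition \ref{commutative-prop}(1) yields the desired equivalence with the category of $G$-commutative monoids in $\mathbf{S}$.

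For part (2) the same argument applies after taking opposite categories: $F$ induces an isomorphism $G\mathcal{F}^{op} \cong (\mathbb{F}\otimes \mathbb{G})^{op}$ of PROPs, hence an isomorphism of algebra categories in $\mathbf{S}$, and one invokes Proposition \ref{commutative-prop}(2) to conclude.

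There is essentially no obstacle here: the entire content sits inside Proposition \ref{PROP-iso-prop} (which itself reduces the work to a bookkeeping computation with $G$-labelled non-commutative sets modelled on \cite[Proposition 3.11]{DMG-IFAS}) and Proposition \ref{commutative-prop}. The only thing worth spelling out explicitly, if desired, is that precomposition by an isomorphism of PROPs is genuinely an isomorphism (not merely an equivalence) of algebra categories, so that the resulting composite remains an equivalence of categories. This is immediate from the definitions.
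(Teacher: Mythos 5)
Your argument is exactly the paper's: the corollary is deduced by combining the isomorphism $G\mathcal{F}\cong \mathbb{F}\otimes\mathbb{G}$ of Proposition \ref{PROP-iso-prop} with Proposition \ref{commutative-prop}, transporting algebras along the isomorphism (and its opposite). The extra remark that precomposition by an isomorphism of PROPs induces an isomorphism of algebra categories is a correct and harmless elaboration of what the paper leaves implicit.
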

\begin{proof}
This follows from Proposition \ref{PROP-iso-prop} and Proposition \ref{commutative-prop}.
\end{proof}

\begin{rem}
Note that when $G=C_2$, the cyclic group of order two, the PROP $G\fas$ is not isomorphic to the PROP $\ifas$ of \cite[3.5]{DMG-IFAS}. In the latter category, the action of $C_2$ also reverses the total-ordering data, encoding an order-reversing involution. However, the PROP $G\mathcal{F}$ is isomorphic to the PROP $\mathcal{I}\mathcal{F}$ of \cite[3.13]{DMG-IFAS}, since these PROPs do not have any total-ordering data.
\end{rem}

\subsection{Double categories}
In this subsection we follow the constructions used in \cite[Section 4]{Pir-PROP} and \cite[Section 4]{DMG-IFAS} to construct a double category from the PROP $G\fas$, which satisfies the star condition. Recall from \cite[2.3]{FL} that a double category satisfies the star condition if a horizontal morphism and a vertical morphism with the same codomain determine a unique bimorphism.

\begin{defn}
The double category $G\fas_2$ has as objects the sets $\ul{n}$ for $n\geqslant 0$. The sets of horizontal and vertical morphisms in $G\fas_2$ are both equal to the set of all morphisms in $G\fas$. A bimorphism in $G\fas_2$ is a square
\begin{center}
\begin{tikzcd}
\ul{n}\arrow[d, "\phi_1 ", swap]\arrow[r, "f_1"] & \ul{p}\arrow[d, "\phi "]\\
\ul{m}\arrow[r, "f", swap] & \ul{q}
\end{tikzcd}
\end{center}
of morphisms in $G\fas$ such that
\begin{itemize}
\item the underlying diagram of finite sets is a pullback square,
\item for all $x\in \ul{m}$ the map $\phi_1^{-1}(x)\rightarrow \phi^{-1}(f(x))$ induced by $f_1$ is an isomorphism of ordered $G$-labelled sets and
\item for all $y\in \ul{p}$ the map $f_1^{-1}(y)\rightarrow f^{-1}(\phi(y))$ induced by $\phi_1$ is an isomorphism of ordered $G$-labelled sets.
\end{itemize}
\end{defn}

\begin{rem}
Note that the squares in the definition of a bimorphisms need not necessarily be commutative.
\end{rem}

\begin{defn}
We define the double category $G\mathcal{F}_2$ similarly to $G\fas_2$; the objects are those of $G\mathcal{F}$, the sets of horizontal and vertical morphisms are the set of morphisms in $G\mathcal{F}$ and the bimorphisms are defined similarly to the bimorphisms of $G\fas_2$, with ordered $G$-labelled sets replaced by $G$-labelled sets. 
\end{defn}

We define two further double categories, which combine the PROPs $G\fas$ and $G\mathcal{F}$.

\begin{defn}
The double category $\mathcal{V}$ has as objects the objects of $G\fas$. The set of vertical morphisms is the set of morphisms in $G\fas$. The set of horizontal morphisms is the set of morphisms in $G\mathcal{F}$. The bimorphisms are defined similarly to those of $G\fas_2$ except that the horizontal morphisms are now in $G\mathcal{F}$.

The double category $\mathcal{H}$ is defined similarly; the set of horizontal morphisms is the set of morphisms in $G\fas$, the set of vertical morphisms is the set of morphisms in $G\mathcal{F}$ and the bimorphisms are defined similarly to those of $G\fas_2$ except that the vertical morphisms are in $G\mathcal{F}$.
\end{defn}

\begin{rem}
Given a horizontal morphism $f\colon \ul{m}\rightarrow \ul{q}$ and a vertical morphism $\phi \colon \ul{p}\rightarrow \ul{q}$ in $G\fas_2$ we determine a unique bimorphism by first taking the pullback of the underlying maps of sets. As for the double category of Example 2 in \cite[Section 4]{Pir-PROP} and the double category $\ifas_2$ of \cite[4.1]{DMG-IFAS}, the maps of the pullback have a unique lift to the category $G\fas$ where the preimage data is induced from $f$ and $\phi$ using the conditions on bimorphisms. Hence the double category $G\fas_2$ satisfies the star condition. A similar justification show that the double categories $G\mathcal{F}_2$, $\mathcal{V}$ and $\mathcal{H}$ also satisfy the star condition.
\end{rem}

\subsection{An alternative description and commutativity}
We can use the double category $G\fas_2$ and Lack's machinery of composing PROPs \cite{Lack} to give an alternative construction of the PROP for $G$-bimonoids in a symmetric monoidal category.

\begin{prop}
\label{comp-prop-prop}
There exist composite PROPs
\begin{multicols}{2}
\begin{enumerate}
\item $G\fas \otimes_{\mathbb{P}\otimes \mathbb{G}} G\fas^{op}$;
\item $G\mathcal{F} \otimes_{\mathbb{P}\otimes \mathbb{G}} G\fas^{op}$;
\item $G\fas \otimes_{\mathbb{P}\otimes \mathbb{G}} G\mathcal{F}^{op}$;
\item $G\mathcal{F} \otimes_{\mathbb{P}\otimes \mathbb{G}} G\mathcal{F}^{op}$.
\end{enumerate}
\end{multicols}
\end{prop}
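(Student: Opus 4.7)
The plan is to apply the general framework from \cite[Section 4]{Pir-PROP} and \cite[Section 4]{DMG-IFAS}: a double category satisfying the star condition, whose categories of horizontal and vertical morphisms share a common subgroupoid of isomorphisms that serves as the underlying permutation groupoid of both PROPs, gives rise to a distributive law between these PROPs, after which Lack's machinery \cite[Section 3]{Lack} produces the composite PROP. Each of the four composites listed corresponds to one of the four double categories $G\fas_2$, $\mathcal{V}$, $\mathcal{H}$, $G\mathcal{F}_2$ introduced in the previous subsection.

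First I would verify that the isomorphisms in both $G\fas$ and $G\mathcal{F}$ are precisely the labelled permutations forming the PROP $\mathbb{P}\otimes \mathbb{G}$ by Proposition \ref{PROP-iso-prop}, and that this common subgroupoid embeds both horizontally and vertically in each of the four double categories. Next, I would invoke the star condition, recorded in the remark preceding this proposition, to define the distributive law: given a cospan
\[\ul{m}\xrightarrow{f}\ul{q}\xleftarrow{\phi}\ul{p}\]
of a horizontal morphism $f$ and a vertical morphism $\phi$, the star condition furnishes a unique bimorphism with a fourth object $\ul{n}$ and edges $\phi_1\colon \ul{n}\to \ul{m}$ and $f_1\colon \ul{n}\to \ul{p}$; sending the cospan to this span is precisely the data required to specify a distributive law in the sense of \cite[3.8]{Lack}.

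The key task is then to verify the distributive law axioms. Associativity follows from the uniqueness part of the star condition applied to iterated bimorphism diagrams, using the fact that the ordered $G$-labelled (respectively $G$-labelled) preimage data is reconstructed uniquely from $f$ and $\phi$ via the bimorphism conditions. Unitality is immediate since identity morphisms carry trivial preimage data and therefore leave the other component unchanged. Compatibility with the symmetric monoidal structure follows from the fact that pullbacks of finite sets, together with the induced (ordered) $G$-labelled set structures, behave well under disjoint union. This verification proceeds in parallel for all four double categories, since the $G\mathcal{F}$-flavoured cases are strictly simpler (no ordering data to track).

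With the distributive laws in hand, Lack's theorem \cite[4.4]{Lack} assembles the composite PROPs: $G\fas_2$ yields (1), $\mathcal{V}$ (horizontal in $G\mathcal{F}$, vertical in $G\fas$) yields (2), $\mathcal{H}$ (horizontal in $G\fas$, vertical in $G\mathcal{F}$) yields (3), and $G\mathcal{F}_2$ yields (4). The main obstacle will be ensuring the $G$-labelling data behaves coherently under iterated pullbacks when checking associativity in the mixed cases (2) and (3), since the labels are being pushed and pulled between the two flavours of preimage data; however this is pinned down by the second and third bullet in the bimorphism definition, which prescribe the labels on each preimage uniquely.
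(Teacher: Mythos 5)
Your proposal is correct and follows essentially the same route as the paper: use the star condition of the four double categories $G\fas_2$, $\mathcal{V}$, $\mathcal{H}$, $G\mathcal{F}_2$ to send a cospan to its unique bimorphism, obtaining a distributive law at the level of PROs, and then invoke Lack's machinery, identifying spans that differ by a bijection in the common groupoid $\mathbb{P}\otimes\mathbb{G}$, to produce the four composite PROPs. The paper only spells out the first case (citing \cite[Theorem 4.6]{Lack} for the descent from the PRO-level distributive law to the composite PROP) and declares the others similar, exactly as you do.
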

\begin{proof}
We give the proof of the first case. The remaining three are similar. Following the method of \cite[Proposition 5.1]{DMG-IFAS} we use the star condition for the double category $G\fas_2$ to define a distributive law of PROs 
\[G\fas^{op}\otimes G\fas\rightarrow G\fas \otimes G\fas^{op}.\]
Both $G\fas$ and $G\fas^{op}$ are PROPs and so $G\fas \otimes G\fas^{op}$ has a PROP structure.

A morphism in $G\fas \otimes G\fas^{op}$ from $\ul{n}$ to $\ul{m}$ can be written as a span
\begin{center}
\begin{tikzcd}
\ul{n} & \ul{p}\arrow[l, "\phi ", swap]\arrow[r, "f"] & \ul{m}
\end{tikzcd}
\end{center}
in $G\fas$. Two spans
\begin{center}
\begin{tikzcd}
\ul{n} & \ul{p}\arrow[l, "\phi ", swap]\arrow[r, "f"] & \ul{m} & \text{and} & \ul{n} & \ul{p}\arrow[l, "\phi_1 ", swap]\arrow[r, "f_1"] & \ul{m}
\end{tikzcd}
\end{center}
are said to be \emph{equivalent} if there exists a bijection $h\colon \ul{p}\rightarrow \ul{p}$ in $G\fas$ such that $\phi_1\circ h=\phi$ and $f_1\circ h=f$. Note that the bijections in the category $G\fas$ correspond precisely to the morphisms in the PROP $\mathbb{P}\otimes \mathbb{G}$.

Let $G\fas\otimes_{\mathbb{P}\otimes \mathbb{G}}G\fas^{op}$ be the category obtained from $G\fas \otimes G\fas^{op}$ by identifying equivalent spans. It follows from \cite[Theorem 4.6]{Lack} that $G\fas\otimes_{\mathbb{P}\otimes \mathbb{G}}G\fas^{op}$ is a composite PROP defined via a distributive law induced from the one defined for PROs.
\end{proof}

\begin{defn}
Let $Q_{\mathcal{V}}=G\mathcal{F}\otimes_{\mathbb{P}\otimes\mathbb{G}}G\fas^{op}$. Let $Q_{\mathcal{H}}=G\mathcal{F}^{op}\otimes_{\mathbb{P}\otimes \mathbb{G}}G\fas$. Let $Q_{G\mathcal{F}}= G\mathcal{F}\otimes_{\mathbb{P}\otimes \mathbb{G}}G\mathcal{F}^{op}$.
\end{defn}

The following theorem gives an alternative proof of the third part of Corollary \ref{symm-bimon-cor}.

\begin{thm}
\label{symm-bimon-thm}
Let $\mathbf{S}$ be a symmetric monoidal category. There is an equivalence of categories
\[\mathbf{Alg}\left(G\fas\otimes_{\mathbb{P}\otimes \mathbb{G}}G\fas^{op} , \mathbf{S}\right) \simeq \mathbf{GBimon}\left(\mathbf{S}\right).\]
\end{thm}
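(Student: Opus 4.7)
The plan is to adapt the proof of Theorem \ref{bimon-thm} to the symmetric setting, but replacing the explicit generator-and-relation distributive law of Definition \ref{dist-law-defn} with the distributive law coming from the star condition on the double category $G\fas_2$, which is already packaged into Proposition \ref{comp-prop-prop}. Since Proposition \ref{comp-prop-prop}(1) establishes that $G\fas\otimes_{\mathbb{P}\otimes\mathbb{G}}G\fas^{op}$ is a composite PROP, Lack's decomposition theorems \cite[4.4,\,4.8]{Lack} are the key engines: they guarantee that an algebra for a composite PROP is exactly a compatible pair of algebras over the two factors, with compatibility dictated by the distributive law, and similarly for morphisms.

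Concretely, I would first invoke \cite[4.4]{Lack} to identify a $\bigl(G\fas\otimes_{\mathbb{P}\otimes\mathbb{G}}G\fas^{op}\bigr)$-algebra structure on $M\in\mathbf{S}$ with the data of a $G\fas$-algebra structure and a $G\fas^{op}$-algebra structure on $M$ subject to the compatibility conditions coming from the distributive law. By Corollary \ref{symm-bimon-cor}(1) and (2), the $G\fas$-algebra is a $G$-monoid structure on $M$ and the $G\fas^{op}$-algebra is a $G$-comonoid structure, so only the compatibility conditions remain to be interpreted. Then I would apply \cite[4.8]{Lack} to obtain the analogous identification of morphisms: a morphism in $\mathbf{S}$ is a map of $\bigl(G\fas\otimes_{\mathbb{P}\otimes\mathbb{G}}G\fas^{op}\bigr)$-algebras if and only if it preserves both the $G\fas$-algebra and the $G\fas^{op}$-algebra structure, i.e. is simultaneously a map of $G$-monoids and of $G$-comonoids, which is precisely a map of $G$-bimonoids.

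The main obstacle, and the only genuine computation, is verifying that the compatibility conditions dictated by the distributive law match the $G$-bimonoid axioms (the bialgebra compatibility of multiplication and comultiplication, unit-counit compatibility, and compatibility of the $G$-action with both the monoid and comonoid structures). The distributive law here is not written out explicitly, but by Proposition \ref{comp-prop-prop} it is determined by the star condition: each cospan consisting of a horizontal $f\colon \ul{m}\to\ul{q}$ and vertical $\phi\colon\ul{p}\to\ul{q}$ in $G\fas$ is resolved uniquely through a pullback square in $G\fas_2$, where the ordered $G$-labelled preimage data on the resolved arrows is forced by the bimorphism conditions. It therefore suffices to check the resolution on the generating cospans given by $m$, $u$ and the elements of $\mathbb{P}\otimes\mathbb{G}$ in the two factors. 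Unpacking the pullback squares on these generators reproduces exactly the squares of Definition \ref{dist-law-defn} (with labelled permutations in place of labelled braids), so the induced compatibility conditions reduce to the standard bimonoid diagrams together with the equivariance of the coproduct and counit under $G$.

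Finally, to close the argument I would note that the isomorphisms in $G\fas$ are precisely the morphisms of $\mathbb{P}\otimes\mathbb{G}$, so identifying equivalent spans as in Proposition \ref{comp-prop-prop} amounts to identifying the two $\mathbb{P}\otimes\mathbb{G}$-actions, exactly as at the end of the proof of Theorem \ref{bimon-thm}. Combining these steps yields the desired equivalence $\mathbf{Alg}\bigl(G\fas\otimes_{\mathbb{P}\otimes\mathbb{G}}G\fas^{op},\mathbf{S}\bigr)\simeq\mathbf{GBimon}(\mathbf{S})$, giving the promised alternative proof of Corollary \ref{symm-bimon-cor}(3) purely in terms of the double category $G\fas_2$ and Lack's distributive law machinery.
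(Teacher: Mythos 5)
Your proposal is correct and follows essentially the same route as the paper's proof: both invoke Lack's composite-PROP machinery to decompose a $\bigl(G\fas\otimes_{\mathbb{P}\otimes\mathbb{G}}G\fas^{op}\bigr)$-algebra into a compatible $G$-monoid and $G$-comonoid structure, identify the compatibility conditions from the star-condition distributive law with the $G$-bimonoid axioms, and handle morphisms via \cite[4.8]{Lack}. Your extra step of checking the distributive law on the generating cospans $m$, $u$ and the elements of $\mathbb{P}\otimes\mathbb{G}$ is exactly the content the paper delegates to the cited arguments of Lack and \cite{DMG-IFAS}.
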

\begin{proof}
We argue analogously to \cite[5.3]{DMG-IFAS}. By \cite[Proposition 4.7]{Lack}, an algebra for $G\fas\otimes_{\mathbb{P}\otimes \mathbb{G}}G\fas^{op}$ in $\mathbf{S}$ consists of an object $M$ with a $G\fas$-algebra structure and a $G\fas^{op}$-algebra structure subject to compatibility conditions. A $G\fas$-algebra structure is a $G$-monoid structure and a $G\fas^{op}$-algebra structure is a $G$-comonoid structure. Arguing analogously to \cite[5.3]{Lack} and \cite[5.3]{DMG-IFAS}, the compatibility conditions arising from the distributive law are precisely the conditions requiring $M$ to be a $G$-bimonoid. Finally we observe that a morphism in $\mathbf{S}$ is a morphism of $G$-bimonoids if and only if it preserves the $G\fas$-algebra structure and the $G\fas^{op}$-algebra structure. By \cite[4.8]{Lack} this is true if and only if it preserves the $G\fas\otimes_{\mathbb{P}\otimes \mathbb{G}}G\fas^{op}$-algebra structure.
\end{proof}

\begin{thm}
Let $\mathbf{S}$ be a symmetric monoidal category. Let $G$ be a group.
\begin{enumerate}
\item The category $\mathbf{Alg}\left(Q_{\mathcal{V}} , \mathbf{S}\right)$ is equivalent to the category of $G$-commutative bimonoids in $\mathbf{S}$.
\item The category $\mathbf{Alg}\left(Q_{\mathcal{H}} , \mathbf{S}\right)$ is equivalent to the category of $G$-cocommutative bimonoids in $\mathbf{S}$.
\item The category $\mathbf{Alg}\left(Q_{G\mathcal{F}} , \mathbf{S}\right)$ is equivalent to the category of $G$-commutative and cocommutative bimonoids in $\mathbf{S}$.
\end{enumerate}
\end{thm}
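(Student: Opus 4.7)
The plan is to argue exactly as in Theorem \ref{symm-bimon-thm}, but with one or both of the factors in the composite PROP replaced by the commutative variant $G\mathcal{F}$ (respectively $G\mathcal{F}^{op}$). The existence of the three composite PROPs $Q_{\mathcal{V}}$, $Q_{\mathcal{H}}$ and $Q_{G\mathcal{F}}$ is already guaranteed by Proposition \ref{comp-prop-prop}, which uses the star condition for the double categories $\mathcal{V}$, $\mathcal{H}$ and $G\mathcal{F}_2$ in place of the star condition for $G\fas_2$.

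For each of the three cases I would apply \cite[Proposition 4.7]{Lack} to unpack an algebra structure in $\mathbf{S}$ as a pair of algebra structures, one for each factor, subject to compatibility conditions coming from the induced distributive law. For $Q_{\mathcal{V}}$ the pair is a $G\mathcal{F}$-algebra together with a $G\fas^{op}$-algebra, which by the commutative corollary stated just before this theorem and by Corollary \ref{symm-bimon-cor} is a $G$-commutative monoid together with a $G$-comonoid. For $Q_{\mathcal{H}}$ the pair is a $G$-monoid together with a $G$-cocommutative comonoid; for $Q_{G\mathcal{F}}$ it is a $G$-commutative monoid together with a $G$-cocommutative comonoid. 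The compatibility conditions are pulled back from the ones in the proof of Theorem \ref{symm-bimon-thm} along the forgetful maps that discard total-ordering data, so they continue to translate into the bimonoid axioms. Finally \cite[4.8]{Lack} identifies morphisms of composite algebras with morphisms of $G$-bimonoids that respect both the (co)commutative monoid and comonoid structures.

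The one step that needs honest checking, and which I expect to be the only real obstacle, is that the distributive laws arising from the star conditions for $\mathcal{V}$, $\mathcal{H}$ and $G\mathcal{F}_2$ are genuinely the ones induced from the distributive law for $G\fas_2$ along the forgetful PROP maps $G\fas \to G\mathcal{F}$ and $G\fas^{op} \to G\mathcal{F}^{op}$. The key observation is that these forgetful functors are the identity on objects and surjective on morphisms, and that the underlying pullback squares used in the definition of a bimorphism are both preserved and reflected; consequently the list of compatibility relations between horizontal and vertical generators is the same as in the proof of Theorem \ref{symm-bimon-thm}, with nothing new imposed and nothing lost. Once this is verified, the three equivalences follow uniformly from the template of Theorem \ref{symm-bimon-thm}.
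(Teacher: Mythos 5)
Your proposal is correct and follows essentially the same route as the paper, whose proof of this theorem is simply an appeal to the argument of Theorem \ref{symm-bimon-thm} with the relevant factors replaced by $G\mathcal{F}$ or $G\mathcal{F}^{op}$; the existence of the composites is exactly Proposition \ref{comp-prop-prop} and the unpacking via \cite[Proposition 4.7, 4.8]{Lack} is the intended argument. The extra check you flag (that the distributive laws from the star conditions for $\mathcal{V}$, $\mathcal{H}$ and $G\mathcal{F}_2$ are compatible with forgetting the ordering data) is a reasonable point of diligence but does not change the approach.
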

\begin{proof}
These equivalences are proved similarly to Theorem \ref{symm-bimon-thm}.
\end{proof}

\section{Group actions and involutions in symmetric monoidal categories}
\label{hyp-bimon-sec}

In symmetric monoidal categories there are examples of monoids that come equipped with an involution and an action of a group $G$ which commute with one another.

For example, if we take a commutative ring $k$ and a group $G$, the group algebra, $k[G]$, comes equipped with an involution (given by the $k$-linear extension of the group homomorphism $g\mapsto g^{-1}$) and an action of $G$ given by the $k$-linear extension of the conjugation action on $G$. The following result shows that we can extend the techniques of this paper to encode this structure using PROPs.

Let $\mathbf{S}$ be a symmetric monoidal category. Recall from \cite[1.1]{DMG-IFAS} that we denote the categories of involutive monoids, involutive comonoids and involutive bimonoids in $\mathbf{S}$ by $\mathbf{IMon}\left(\mathbf{S}\right)$, $\mathbf{IComon}\left(\mathbf{S}\right)$ and $\mathbf{IBimon}\left(\mathbf{S}\right)$ respectively.

\begin{thm}
\label{hyp-bimon-thm}
Let $\mathbf{S}$ be a symmetric monoidal category. Let $G$ be a group. There are equivalences of categories
\begin{enumerate}
\item $\mathbf{Alg}\left( \mathbb{D}\otimes \mathbb{H}\otimes \mathbb{G} , \mathbf{S}\right) \simeq \mathrm{Fun}\left(G, \mathbf{IMon}\left(\mathbf{S}\right)\right)$;
\item $\mathbf{Alg}\left( \left(\mathbb{D}\otimes \mathbb{H}\otimes \mathbb{G}\right)^{op} , \mathbf{S}\right) \simeq \mathrm{Fun}\left(G, \mathbf{IComon}\left(\mathbf{S}\right)\right)$ and
\item $\mathbf{Alg}\left(\mathbb{D}\otimes (\mathbb{H}\otimes \mathbb{G})\otimes \mathbb{D}^{op} , \mathbf{S}\right) \simeq \mathrm{Fun}\left(G, \mathbf{IBimon}\left(\mathbf{S}\right)\right)$.
\end{enumerate}  
\end{thm}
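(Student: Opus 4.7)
The plan is to prove the three parts in sequence, each by adapting an earlier result of the paper to the hyperoctahedral equivariant setting. The essential observation is that an $(\mathbb{H}\otimes\mathbb{G})$-algebra structure encodes simultaneously an involution and a $G$-action which commute, because in the semi-direct product $G^n\rtimes H_n = G^n\rtimes(C_2^n\rtimes\Sigma_n)$ the symmetric group $\Sigma_n$ acts by permutation on both $G^n$ and $C_2^n$, while the factors $G^n$ and $C_2^n$ do not act on one another. At degree $n=1$ this reduces to $G\rtimes C_2 = G\times C_2$, so an $\mathbb{H}\otimes\mathbb{G}$-action on an object $M$ yields an involution $\iota\colon M\to M$ and morphisms $g\colon M\to M$ for each $g\in G$ which commute.

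For part (1), I would follow the pattern of Proposition \ref{comp-prop} (compare also \cite[2.13]{DMG-IFAS}). By \cite[3.10]{Lack}, a $U(\mathbb{D}\otimes\mathbb{H}\otimes\mathbb{G})$-algebra on $M\in\mathbf{S}$ consists of a $\mathbb{D}$-algebra structure (a monoid) together with an $(\mathbb{H}\otimes\mathbb{G})$-algebra structure, subject to a compatibility condition dictated by the crossed simplicial group structure described in Section \ref{dist-law-sec}. The $(\mathbb{H}\otimes\mathbb{G})$-algebra structure on $M$ provides, for each $n$, an action of $G^n\rtimes H_n$ on $M^{\otimes n}$ compatible with disjoint union of morphisms. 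The PROP condition, via \cite[5.5]{Lack}, forces the $\Sigma_n$-subaction to coincide with the symmetry isomorphisms, leaving as residual data the involution $\iota$ and the commuting $G$-action described above. The crossed simplicial compatibility with $\mathbb{D}$ then upgrades $\iota$ to a monoid involution and $G$ to an action by monoid endomorphisms, yielding exactly a functor $G\to\mathbf{IMon}(\mathbf{S})$. The morphism statement follows from \cite[3.12]{Lack}. Part (2) is then formal by the opposite-category manipulation used in the comonoid half of Proposition \ref{comp-prop}: the isomorphism $\mathrm{Fun}(G,\mathbf{IComon}(\mathbf{S}))\cong\mathrm{Fun}(G^{op},\mathbf{IMon}(\mathbf{S}^{op}))^{op}$, together with $G\cong G^{op}$ and the identity $\mathbf{Alg}(\mathbf{P},\mathbf{S}^{op})^{op}=\mathbf{Alg}(\mathbf{P}^{op},\mathbf{S})$, reduces the claim to part (1) applied to $\mathbf{S}^{op}$.

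For part (3), I would mirror Theorem \ref{comm-bimon-thm}. Setting $Q=\mathbb{D}\otimes\mathbb{H}\otimes\mathbb{G}$, I would define a distributive law $Q^{op}\otimes_{\mathbb{H}\otimes\mathbb{G}} Q \to Q\otimes_{\mathbb{H}\otimes\mathbb{G}} Q^{op}$ by transcribing Definition \ref{dist-law-defn} with the labelled braids replaced by labelled hyperoctahedral elements and the crossed simplicial group structure of $\mathbb{D}\otimes\mathbb{H}\otimes\mathbb{G}$ in place of that of $\mathbb{D}\otimes\mathbb{B}\otimes\mathbb{G}$. By \cite[4.4, 4.8]{Lack}, algebras for the composite correspond to objects carrying both a $G$-involutive monoid and a $G$-involutive comonoid structure, and the compatibility conditions induced from the clauses of the distributive law are precisely the bimonoid axioms, as in the analogous identifications in Theorem \ref{comm-bimon-thm} and \cite[5.3]{DMG-IFAS}. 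Since the isomorphisms of $Q$ coincide with $\mathbb{H}\otimes\mathbb{G}$, taking equivalence classes of spans yields $Q\otimes_{\mathbb{H}\otimes\mathbb{G}} Q^{op}\cong\mathbb{D}\otimes(\mathbb{H}\otimes\mathbb{G})\otimes\mathbb{D}^{op}$, from which the equivalence follows. The main obstacle is checking that Definition \ref{dist-law-defn} transplants cleanly to the hyperoctahedral setting; however this reduces to the same crossed simplicial identities used there, which hold uniformly by the extension of \cite[Theorem 3.10]{FL} recorded in Section \ref{dist-law-sec}, so no genuinely new combinatorial verification is required.
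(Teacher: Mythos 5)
Your proposal is correct and follows essentially the same route as the paper: parts (1) and (2) by rerunning the argument of Proposition \ref{comp-prop} with $\mathbb{H}\otimes\mathbb{G}$ in place of $\mathbb{P}\otimes\mathbb{G}$ (plus the opposite-category reduction for comonoids), and part (3) by transplanting the distributive law of Definition \ref{dist-law-defn} to labelled hyperoctahedral elements and invoking the composite-PROP results of Lack exactly as in Theorem \ref{bimon-thm}. The only cosmetic difference is that you spell out the degree-one analysis $G\rtimes H_1\cong G\times C_2$ explaining why the involution and the $G$-action commute, which the paper leaves implicit.
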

\begin{proof}
We define the necessary distributive law by replacing the labelled braids by labelled hyperoctahedral group elements in Definition \ref{dist-law-defn} and using the crossed simplicial group structure in $\mathbb{D}\otimes \mathbb{H}\otimes \mathbb{G}$. The proof then follows the same argument of Theorem \ref{bimon-thm}, using \cite[4.4,\, 4.8]{Lack} in place of \cite[5.9,\, 5.10]{DMG-PROB}. 
\end{proof}

\bibliographystyle{alpha}
\bibliography{gmon-refs}

\begin{thebibliography}{BCWW06}

\bibitem[Abr96]{Abrams}
Lowell Abrams.
\newblock Two-dimensional topological quantum field theories and {Frobenius}
  algebras.
\newblock {\em J. Knot Theory Ramifications}, 5(5):569--587, 1996.

\bibitem[Aul10]{Ault}
Shaun~V. Ault.
\newblock Symmetric homology of algebras.
\newblock {\em Algebr. Geom. Topol.}, 10(4):2343--2408, 2010.

\bibitem[Aul14]{Ault-HO}
Shaun~V. Ault.
\newblock Homology operations in symmetric homology.
\newblock {\em Homology Homotopy Appl.}, 16(2):239--261, 2014.

\bibitem[BCR18]{BCR}
John~C. Baez, Brandon Coya, and Franciscus Rebro.
\newblock Props in network theory.
\newblock {\em Theory Appl. Categ.}, 33:727--783, 2018.

\bibitem[BCWW06]{BCWW}
A.~J. Berrick, F.~R. Cohen, Y.~L. Wong, and J.~Wu.
\newblock Configurations, braids, and homotopy groups.
\newblock {\em J. Am. Math. Soc.}, 19(2):265--326, 2006.

\bibitem[Bou03]{Bouc}
Serge Bouc.
\newblock Hochschild constructions for {Green} functors.
\newblock {\em Commun. Algebra}, 31(1):403--436, 2003.

\bibitem[BV68]{BV}
J.~M. Boardman and R.~M. Vogt.
\newblock Homotopy-everything {$H$}-spaces.
\newblock {\em Bull. Amer. Math. Soc.}, 74:1117--1122, 1968.

\bibitem[Con83]{Con}
Alain Connes.
\newblock Cohomologie cyclique et foncteurs {${\rm Ext}\sp n$}.
\newblock {\em C. R. Acad. Sci. Paris S\'er. I Math.}, 296(23):953--958, 1983.

\bibitem[DK15]{DK}
T.~Dyckerhoff and M.~Kapranov.
\newblock Crossed simplicial groups and structured surfaces.
\newblock In {\em Stacks and categories in geometry, topology, and algebra.
  CATS4 conference on higher categorical structures and their interactions with
  algebraic geometry, algebraic topology and algebra, CIRM, Luminy, France,
  July 2--7, 2012.}, pages 37--110. Providence, RI: American Mathematical
  Society (AMS), 2015.

\bibitem[Fie]{Fie}
Z.~Fiedorowicz.
\newblock The symmetric bar construction.
\newblock URL: \url{https://people.math.osu.edu/fiedorowicz.1/}.

\bibitem[FL91]{FL}
Zbigniew Fiedorowicz and Jean-Louis Loday.
\newblock Crossed simplicial groups and their associated homology.
\newblock {\em Trans. Amer. Math. Soc.}, 326(1):57--87, 1991.

\bibitem[FT87]{FT}
B.~L. Fe\u{\i}gin and B.~L. Tsygan.
\newblock Additive {$K$}-theory.
\newblock In {\em {$K$}-theory, arithmetic and geometry ({M}oscow,
  1984--1986)}, volume 1289 of {\em Lecture Notes in Math.}, pages 67--209.
  Springer, Berlin, 1987.

\bibitem[FY89]{FY}
Peter~J. Freyd and David~N. Yetter.
\newblock Braided compact closed categories with applications to low
  dimensional topology.
\newblock {\em Adv. Math.}, 77(2):156--182, 1989.

\bibitem[Gra20]{DMG-IFAS}
Daniel Graves.
\newblock {PROP}s for involutive monoids and involutive bimonoids.
\newblock {\em Theory Appl. Categ.}, 35:No. 42, 1564--1575, 2020.

\bibitem[Gra22a]{DMG-PROB}
Daniel Graves.
\newblock Composing {PROB}s.
\newblock {\em Theory Appl. Categ.}, 38:No. 26, 1050--1061, 2022.

\bibitem[Gra22b]{DMG-e-inf}
Daniel Graves.
\newblock E-infinity structure in hyperoctahedral homology, 2022.
\newblock To appear in \emph{Homology Homotopy Appl.} arXiv e-print
  2108.05154v2.

\bibitem[Gra22c]{DMG-hyp}
Daniel Graves.
\newblock Hyperoctahedral homology for involutive algebras.
\newblock {\em Homology Homotopy Appl.}, 24(1):1--26, 2022.

\bibitem[HR17]{HR}
Philip Hackney and Marcy Robertson.
\newblock The homotopy theory of simplicial props.
\newblock {\em Isr. J. Math.}, 219(2):835--902, 2017.

\bibitem[Ina22]{Inassaridze}
Hvedri Inassaridze.
\newblock {(Co)}homology of {$\Gamma$}-groups and {$\Gamma$}-homological
  algebra.
\newblock {\em Eur. J. Math.}, 2022.

\bibitem[JS93]{JS}
Andr\'{e} Joyal and Ross Street.
\newblock Braided tensor categories.
\newblock {\em Adv. Math.}, 102(1):20--78, 1993.

\bibitem[JY09]{JY}
Mark~W. Johnson and Donald Yau.
\newblock On homotopy invariance for algebras over colored {PROPs}.
\newblock {\em J. Homotopy Relat. Struct.}, 4(1):275--315, 2009.

\bibitem[Kra87]{Kras-skew}
R.~Krasauskas.
\newblock Skew-simplicial groups.
\newblock {\em Litovsk. Mat. Sb.}, 27(1):89--99, 1987.

\bibitem[Kra96]{Krasauskas}
R.~Krasauskas.
\newblock Crossed simplicial groups of framed braids and mapping class groups
  of surfaces.
\newblock {\em Liet. Mat. Rink.}, 36(3):330--353, 1996.

\bibitem[Lac04]{Lack}
Stephen Lack.
\newblock Composing {PROPS}.
\newblock {\em Theory Appl. Categ.}, 13:No. 9, 147--163, 2004.

\bibitem[Lod98]{Lod}
Jean-Louis Loday.
\newblock {\em Cyclic homology}, volume 301 of {\em Grundlehren der
  Mathematischen Wissenschaften}.
\newblock Springer-Verlag, Berlin, second edition, 1998.

\bibitem[Mac65]{MLCA}
S.~MacLane.
\newblock Categorical algebra.
\newblock {\em Bull. Am. Math. Soc.}, 71:40--106, 1965.

\bibitem[Mar96]{markl}
Martin Markl.
\newblock Cotangent cohomology of a category and deformations.
\newblock {\em J. Pure Appl. Algebra}, 113(2):195--218, 1996.

\bibitem[ML98]{ML}
Saunders Mac~Lane.
\newblock {\em Categories for the working mathematician}, volume~5 of {\em
  Graduate Texts in Mathematics}.
\newblock Springer-Verlag, New York, second edition, 1998.

\bibitem[MM02]{MM-equiv}
M.~A. Mandell and J.~P. May.
\newblock {\em Equivariant orthogonal spectra and {{\(S\)}}-modules}, volume
  755 of {\em Mem. Am. Math. Soc.}
\newblock Providence, RI: American Mathematical Society (AMS), 2002.

\bibitem[MM20]{MM1}
Anibal~M. Medina-Mardones.
\newblock A finitely presented {{\(E_{\infty}\)}}-prop. {I}: {Algebraic}
  context.
\newblock {\em High. Struct.}, 4(2):1--21, 2020.

\bibitem[MM21]{MM2}
Anibal~M. Medina-Mardones.
\newblock A finitely presented {{\(E_\infty\)}}-prop. {II}: {Cellular} context.
\newblock {\em High. Struct.}, 5(1):186--203, 2021.

\bibitem[MMS09]{MMS}
M.~Markl, S.~Merkulov, and S.~Shadrin.
\newblock Wheeled {PROPs}, graph complexes and the master equation.
\newblock {\em J. Pure Appl. Algebra}, 213(4):496--535, 2009.

\bibitem[MMSS01]{MMSS}
M.~A. Mandell, J.~P. May, S.~Schwede, and B.~Shipley.
\newblock Model categories of diagram spectra.
\newblock {\em Proc. Lond. Math. Soc. (3)}, 82(2):441--512, 2001.

\bibitem[MNN17]{MNN}
Akhil Mathew, Niko Naumann, and Justin Noel.
\newblock Nilpotence and descent in equivariant stable homotopy theory.
\newblock {\em Adv. Math.}, 305:994--1084, 2017.

\bibitem[NS18]{NS}
Thomas Nikolaus and Peter Scholze.
\newblock On topological cyclic homology.
\newblock {\em Acta Math.}, 221(2):203--409, 2018.

\bibitem[Pir02]{Pir-PROP}
Teimuraz Pirashvili.
\newblock On the {PROP} corresponding to bialgebras.
\newblock {\em Cah. Topol. G\'{e}om. Diff\'{e}r. Cat\'{e}g.}, 43(3):221--239,
  2002.

\bibitem[RSW05]{RSW}
R.~Rosebrugh, N.~Sabadini, and R.~F.~C. Walters.
\newblock Generic commutative separable algebras and cospans of graphs.
\newblock {\em Theory Appl. Categ.}, 15:164--177, 2005.

\bibitem[RW02]{RW}
Robert Rosebrugh and R.~J. Wood.
\newblock Distributive laws and factorization.
\newblock {\em J. Pure Appl. Algebra}, 175:327--353, 2002.

\end{thebibliography}

\end{document}